\documentclass{amsart}

  \usepackage[utf8]{inputenc}
  \usepackage{amsmath}
  \usepackage{amssymb}
  \usepackage{amsthm}
  \usepackage{mathrsfs}
  \usepackage{bbm}
  \usepackage{graphicx}
  \usepackage{color}

	\newtheoremstyle{slanted}
	{}
	{}
	{\slshape}
	{}
	{\bfseries}
	{.}
	{ }
	{}
	
	\theoremstyle{slanted}
	\newtheorem{theo}{Theorem}[section]

	\newtheorem{lemma}[theo]{Lemma}
	\newtheorem{definition}[theo]{Definition}
	\newtheorem{corollary}[theo]{Corollary}

	\def\egdef{:=}
	\def\Id{\mathop{\mbox{Id}}}
	
	\newcommand{\tend}[3][]{\xrightarrow[#2\to#3]{#1}}

	\newcommand{\EE}{\mathbb{E}}

	\def\ind#1{\mathbbmss{1}_{#1}}
	\newcommand{\ZZ}{\mathbb{Z}}

	\newcommand{\RR}{\mathbb{R}}

	\newcommand{\A}{\mathscr{A}}
	\newcommand{\F}{\mathscr{F}}
	\newcommand{\M}{\mathscr{M}}

\def\u{\bigsqcup}
\def\eps{\varepsilon}
\def\T{(T_t)_{t\in\RR}}

\title{ Around King's Rank-One theorems: Flows and $\ZZ^n$-actions}
\author{Élise Janvresse}
\author{Thierry de la Rue}
\author{Valery Ryzhikov}
\thanks{This work is partially supported by the grant NSh 8508.2010.1. The first draft of the paper was written while the third author was visiting the University of Rouen.}
\address{\'Elise Janvresse, Thierry de la Rue:
Laboratoire de Math\'ematiques Rapha\"el Salem, 
Universit\'e de Rouen, CNRS -- 
Avenue de l'Universit\'e -- 
F76801 Saint \'Etienne du Rouvray.}
\email{Elise.Janvresse@univ-rouen.fr\\Thierry.de-la-Rue@univ-rouen.fr}
\address{Valery Ryzhikov: 
Moscow State University, Faculty of Mechanics and Mathematics,
Leninskie Gory, Moscow, 119991 Russia
}
\email{vryzh@mail.ru}
\keywords{Rank-one actions; Weak Closure Theorem; Factors; Joinings.}
\subjclass[2000]{37A10,37A15,37A35}

\begin{document}
\bibliographystyle{amsplain}

\begin{abstract}
 We study the generalizations of Jonathan King's rank-one theorems (Weak-Closure Theorem and rigidity of factors) to the case of rank-one $\RR$-actions (flows) and rank-one $\ZZ^n$-actions. We prove that these results remain valid in the case of rank-one flows. In the case of rank-one $\ZZ^n$ actions, where counterexamples have already been given, we prove partial Weak-Closure Theorem and partial rigidity of factors.
\end{abstract}

\maketitle

\section{Introduction}

Very important examples in ergodic theory have been constructed in the class of rank-one transformations, which is closely connected to the notion of transformations with fast cyclic approximation \cite{KS1966}: If the rate of approximation is sufficiently fast, then the transformation will be inside the rank-one class.
The notion of rank-one transformations has been defined in \cite{Ornstein1972}, where mixing examples have appeared. Later, Daniel Rudolph used them for a machinery of counterexamples \cite{Rudolph1979}.

Jonathan King contributed to the theory of rank-one transformations by several deep and interesting facts. His Weak-Closure-Theorem (WCT)~\cite{King1986} is now a  classical result with applications even out of the range of $\ZZ$-actions (see for example \cite {Tikhonov2006}).
He also proved the minimal-self-joining (MSJ) property for rank-one mixing automorphisms (see \cite{King1988}), the rigidity of non-trivial factors~\cite{King1986}, and the weak closure property for all joinings for flat-roof rank-one transformations~\cite{King2001}.

A natural question is whether the corresponding assertions remain true for flows ($\RR$-actions) and for $\ZZ^n$-actions.  We show that for flows the situation is quite similar:  The joining proof of the Weak-Closure Theorem given in~\cite{Ryzhikov1993} (see also~\cite{Ryzhikov2010}) can be adapted to the situation of a rank-one $\RR$-action (Theorem~\ref{thm:WC}).
We also give in the same spirit a proof of the rigidity of non-trivial factors of rank-one flows (Theorem~\ref{thm:factors}) which, with some simplification, provides a new proof of King's result in the case of $\ZZ$-actions. We prove a flat-roof flow version as well (Theorem~\ref{thm:flat roof}).
Note that a proof of the Weak-Closure Theorem for rank-one flows had already been published in~\cite{Zeitz1993}. Unfortunately it relies on the erroneous assumption that if $\T$ is a rank-one flow, then there exists a real number $t_0$ such that $T_{t_0}$ is a rank-one transformation (see beginning of Section~3.2 in~\cite{Zeitz1993}).

Concerning multidimensional rank-one actions, the situation is quite different. The Weak-Closure Theorem is no more true \cite{DK2002}, and factors may be non-rigid~\cite{DS2009}. Rank-one partially mixing $\ZZ$-actions have MSJ  \cite{KT1991}, however it is proved in \cite{DS2009} that for $\ZZ^2$-actions this is generally not true. We remark that  it was an answer for $\ZZ^2$-action to Jean-Paul Thouvenot's question: Whether a mildly mixing rank-one action possesses MSJ,  though this interesting  problem remains open for $\ZZ$-actions. Regardless these surprising results, there are some partial versions of WCT: Commuting automorphisms can be partially approximated by elements of the action (Corollary~\ref{Cor:partialWCT}), and non-trivial factors must be partially rigid (Corollary~\ref{Cor:partial rigidity}). We present these results as consequences of A. Pavlova's theorem (Theorem~\ref{thm:Pavlova}, see also \cite{Ryzhikov2008}) .

\section{Preliminaries and notations}
\label{Sec:def}

\subsection*{Weak convergence of probability measures}

We are interested in groups of automorphisms of a Lebesgue space $(X,\A,\mu)$, where $\mu$ is a continuous probability measure. The properties of these group actions are independent of the choice of the underlying space $X$, and for practical reasons we will assume that $X=\{0,1\}^\ZZ$, equipped with the product topology and the Borel $\sigma$-algebra. This $\sigma$-algebra is generated by the cylinder sets, that is sets obtained by fixing a finite number of coordinates. On the set $\M_1(X)$ of Borel probability measures on $X$, we will consider the topology of weak convergence, which is characterized by 
$$ \nu_n\tend[w]{n}{\infty} \nu\quad \Longleftrightarrow\quad  \text{for all cylinder set }C,\ \nu_n(C)\tend{n}{\infty} \nu(C), $$
and turns $\M_1(X)$ into a compact metrizable space. 

We will often consider probability measures on $X\times X$, with the same topology of weak convergence. We will use the following observation: If $\nu_n$ and $\nu$ in $\M_1(X\times X)$ have their marginals absolutely continuous with respect to our reference measure $\mu$, with bounded density, then the weak convergence of $\nu_n$ to $\nu$ ensures that for all \emph{measurable} sets $A$ and $B$ in $\A$, $\nu_n(A\times B)\tend{n}{\infty} \nu(A\times B)$.

\subsection*{Self-joinings}

Let $T=(T_g)_{g\in G}$ be an action of the Abelian group $G$ by automorphism of the Lebesgue space $(X,\A,\mu)$. A \emph{self-joining} of $T$ is any probability measure on $X\times X$ with both marginals equal to $\mu$ and invariant by $T\times T=(T_g\times T_g)_{g\in G}$. For any automorphism $S$ commuting with $T$, we will denote by $\Delta_S$ the self-joining concentrated on the graph of $S^{-1}$, defined by
$$ \forall A,B\in\A,\ \Delta_S(A\times B)\egdef \mu(A\cap SB). $$
In particular, for any $g\in G$ we will denote by $\Delta^g$ the self-joining  $\Delta_{T_g}$. In the special case where $S=T_0=\Id$, we will note simply $\Delta$ instead of $\Delta^0$ or $\Delta_{\Id}$.

If $\F$ is a factor (a sub-$\sigma$-algebra invariant under the action $(T_g)$), we denote by $\mu\otimes_{\F}\mu$  the \emph{relatively independent joining above $\F$}, defined by
$$ \mu\otimes_{\F}\mu (A\times B) \egdef \int_X \EE_\mu[\ind{A}|\F] \ \EE_\mu[\ind{B}|\F]\, d\mu. $$
Recall that $\mu\otimes_{\F}\mu$ coincides with $\Delta$ on the $\sigma$-algebra $\F\otimes\F$.

\subsection*{Flows}
A \emph{flow} is a continuous family $\T$ of automorphisms of the Lebesgue space $(X,\A,\mu)$, with $T_t\circ T_s=T_{t+s}$ for all $t,s\in\RR$, and such that $(t,x)\mapsto T_t(x)$ is measurable. We recall that the measurability condition implies that for all measurable set $A$, $\mu(A\vartriangle T_tA)\tend{t}{0} 0$.

\begin{lemma}
\label{lemma:lambda=nu}
 Let $\T$ be an ergodic flow on $(X,\A,\mu)$. Let $Q$ be a dense subgroup of $\RR$, and $\lambda$ be an invariant probability measure for the action of $(T_t)_{t\in Q}$. Assume further that $\lambda\ll\mu$, with $\frac{d\lambda}{d\mu}$ bounded by some constant $C$. Then $\lambda=\mu$. 
\end{lemma}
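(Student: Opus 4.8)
The plan is to argue through the Radon--Nikodym density $f \egdef \frac{d\lambda}{d\mu}$, which is a nonnegative element of $L^2(\mu)$ (it is bounded by $C$) satisfying $\int_X f\,d\mu = 1$. First I would observe that the $(T_t)_{t\in Q}$-invariance of $\lambda$, combined with the $(T_t)$-invariance of $\mu$, gives $f\circ T_t = f$ in $L^2(\mu)$ for every $t\in Q$: for any $A\in\A$ and $t\in Q$,
$$ \int_A f\circ T_t\, d\mu = \int_{T_t^{-1}A} f\, d\mu = \lambda(T_t^{-1}A) = \lambda(A) = \int_A f\,d\mu, $$
so the two functions $f\circ T_t$ and $f$ have the same integral over every measurable set.

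Next I would exploit the continuity of the flow. Let $U_t\colon g\mapsto g\circ T_t$ denote the Koopman operator on $L^2(\mu)$; each $U_t$ is unitary. Using the recalled fact that $\mu(A\vartriangle T_tA)\tend{t}{0}0$ for every measurable $A$, together with $\mu(T_t^{-1}A\vartriangle A)=\mu(A\vartriangle T_tA)$, one gets $\|U_t\ind{A}-\ind{A}\|_2^2 = \mu(T_t^{-1}A\vartriangle A)\tend{t}{0}0$. By linearity this extends to simple functions, and then to all of $L^2(\mu)$, because the $U_t$ are isometries and simple functions are dense. Hence $t\mapsto U_tg$ is continuous at $0$ for every $g\in L^2(\mu)$, and by the group law $U_{t+s}=U_tU_s$ it is continuous on all of $\RR$.

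Applying this to $g=f$: the map $t\mapsto f\circ T_t$ is continuous from $\RR$ to $L^2(\mu)$ and, by the first step, constant equal to $f$ on the dense subgroup $Q$; therefore $f\circ T_t = f$ for every $t\in\RR$. Thus $f$ is invariant under the whole flow $\T$, which is ergodic, so $f$ is $\mu$-almost surely constant; since $\int_X f\,d\mu = 1$ this forces $f=1$, i.e.\ $\lambda=\mu$. The only point requiring care is the passage from the dense subgroup $Q$ to all of $\RR$, which is precisely where the continuity of the flow (rather than just the measurability of the individual transformations) enters; the remaining steps are routine.
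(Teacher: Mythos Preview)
Your proof is correct and follows essentially the same approach as the paper's: extend the invariance of $\lambda$ from the dense subgroup $Q$ to all of $\RR$ using the continuity of the flow together with the bounded density, then conclude by ergodicity of $\mu$. The only cosmetic difference is that the paper carries this out directly at the level of measures (showing $\lambda(T_tA)=\lambda(A)$ via $\lambda(T_tA\vartriangle T_{t_n}A)\le C\,\mu(T_tA\vartriangle T_{t_n}A)\to 0$), whereas you pass through the Radon--Nikodym density in $L^2(\mu)$ and the strong continuity of the Koopman representation; the underlying idea is identical.
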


\begin{proof}
Let $t\in\RR$, and let $(t_n)$ be a sequence in $Q$ converging to $t$. For any measurable set $A$, we have
$$ \lambda \Bigl( T_t A \, \vartriangle\, T_{t_n} A \Bigr) 
\le C \mu \Bigl( T_t A \, \vartriangle\, T_{t_n} A \Bigr) \tend{n}{\infty} 0. $$
Hence $\lambda (T_{t} A)=\lim_n \lambda (T_{t_n} A)=\lambda(A)$. This proves that $\lambda$ is $T_t$-invariant for each $t\in\RR$. Since $\mu$ is ergodic under the action of $\T$, we get $\lambda=\mu$. 
\end{proof}

\section{Rank-one flows}

\begin{definition}
\label{def:rank-one flow}
 A flow $\T$ is of rank one if there exists a sequence $(\xi_j)$ of partitions
of the form
$$\xi_j= \left\{ E_j,\ T_{s_j}E_j, \ T_{s_j}^2 E_j,\ \dots,  T_{s_j}^{h_j-1}E_j, X\setminus \u_{i=0}^{h_j-1} T_{s_j}^i E_j\right\}$$
such that $\xi_j$ converges to the partition into points (that is, for every measurable set $A$ and every $j$, we can find a $\xi_j$-measurable set $A_j$ in such a way that $\mu(A\vartriangle A_j)\tend{j}{\infty}0$),
$s_j/s_{j+1}$ are integers, $s_j\to 0$ and $s_jh_j\to\infty$.
\end{definition}

Several authors have generalized the notion of a rank-one transformation to an $\RR$-action using continuous Rokhlin towers (see \emph{e.g.} \cite{Prikhodko2001}). One can show that the above definition includes all earlier definitions of rank-one flows with continuous Rokhlin towers. The above definition without the requirement that $s_j/s_{j+1}$ be integers was given by the third author in~\cite{Ryzhikov1993}.

\begin{lemma}
\label{lemma:new s_j}
 Let $\T$ be a rank-one flow. Then the sequences $(s_j)$ and $(h_j)$ in the definition can be chosen so that 
$$
s_j^2h_j\tend{j}{\infty}\infty.
$$
\end{lemma}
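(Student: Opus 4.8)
\emph{Approach.} The plan is to produce the new presentation by \emph{grouping consecutive levels} of the given towers, after passing to a subsequence. A partition of the tower form of Definition~\ref{def:rank-one flow} essentially admits only one useful modification: $h_j$ cannot be decreased without enlarging the complement $X\setminus\u_{i}T_{s_j}^iE_j$, while $s_j$ can be replaced by an integer multiple $m_js_j$, obtained by stacking $m_j$ consecutive levels into a single new level. So I would choose a subsequence $(j_k)$ and set $E'_k:=\u_{i=0}^{m_k-1}T_{s_{j_k}}^iE_{j_k}$, $s'_k:=m_ks_{j_k}$, $h'_k:=\lfloor h_{j_k}/m_k\rfloor$, where $m_k$ is an integer in the window $(s_{j_k}^2h_{j_k})^{-1}\ll m_k\ll s_{j_k}^{-1}$; this window is non-empty precisely because $s_{j_k}h_{j_k}\to\infty$.

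\emph{Metric and divisibility conditions.} As $m_k\ll s_{j_k}^{-1}\ll h_{j_k}$ one has $h'_k\asymp h_{j_k}/m_k$, whence $s'_k=m_ks_{j_k}\to0$, $s'_kh'_k\ge s_{j_k}h_{j_k}-m_ks_{j_k}\to\infty$, and $(s'_k)^2h'_k\asymp m_ks_{j_k}^2h_{j_k}\to\infty$, which is the conclusion sought. For the divisibility $s'_{k-1}/s'_k\in\ZZ$: the ratio $s_{j_{k-1}}/s_{j_k}$ is automatically an integer, so one may simply take $m_k:=s_{j_{k-1}}/s_{j_k}$ for $k\ge2$ — it divides $m_{k-1}(s_{j_{k-1}}/s_{j_k})$, and it sits in the window once $j_k$ is large ($s_{j_{k-1}}s_{j_k}h_{j_k}\to\infty$ for the lower bound, $s_{j_{k-1}}\le1$ for the upper one). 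Then $s'_k=s_{j_{k-1}}$, so $(s'_k)^2h'_k\asymp s_{j_{k-1}}\,s_{j_k}h_{j_k}$, which tends to infinity provided each $j_k$ is picked with $s_{j_k}h_{j_k}\ge k/s_{j_{k-1}}$.

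\emph{Convergence to points.} This is the core of the argument, and it suffices to check it on a fixed countable dense family of sets (approximability of a set is stable under $L^1$-approximation). Take such an $A$ and, for each $k$, a $\xi_{j_k}$-measurable set $A_k=\u_{l\in I_k}T_{s_{j_k}}^lE_{j_k}$ (modulo the complement atom, of vanishing measure) with $\mu(A\vartriangle A_k)$ small. The index set $I_k$ is not too fragmented: the left endpoint of each maximal block of consecutive integers in $I_k$ lies in $I_k\setminus(I_k+1)$, hence contributes $\mu(E_{j_k})$ to $\mu(A_k\vartriangle T_{s_{j_k}}A_k)$; so the number of such blocks is at most of order $\mu(E_{j_k})^{-1}\mu(A_k\vartriangle T_{s_{j_k}}A_k)$, that is, of order $h_{j_k}\bigl(\mu(A\vartriangle T_{s_{j_k}}A)+\mu(A\vartriangle A_k)\bigr)$ up to a bounded term, using $h_{j_k}\mu(E_{j_k})\to1$. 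Rounding $A_k$ to the nearest union of the new atoms (groups of $m_k$ consecutive levels) affects only the groups straddling $I_k$ — at most $O(1)$ per block — so the error is at most of order $m_k\mu(E_{j_k})$ times the number of blocks, hence $\lesssim m_k\mu(A\vartriangle T_{s_{j_k}}A)+m_k\mu(A\vartriangle A_k)+m_k/h_{j_k}$.

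\emph{The main obstacle.} The term $m_k/h_{j_k}$ tends to $0$ since $m_k\ll h_{j_k}$, so everything comes down to the first two terms. By measurability of the flow $\mu(A\vartriangle T_{s_{j_k}}A)\to0$, and $\mu(A\vartriangle A_k)$ can be made as small as one likes; but one must choose the subsequence $(j_k)$ sparse enough that these decay faster than the growing factor $m_k$, simultaneously for every $A$ in the family. This is the delicate point: it calls for a diagonal extraction, and it is convenient to take for the dense family finite unions of levels of finitely many of the (modified) towers, so that for $k$ large $A$ is genuinely $\xi_{j_k}$-measurable (killing the middle term) and its index set at stage $j_k$ has a controlled number of blocks. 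Alternatively — and perhaps more transparently — one may first reduce the rank-one flow to a presentation by continuous Rokhlin towers of flow-heights $\ell_j\to\infty$, in which $s_j$ and $h_j$ are free subject only to $s_jh_j=\ell_j$; choosing $s_j\asymp\ell_j^{-1/2}$ then gives $s_j^2h_j\asymp\ell_j^{1/2}\to\infty$ at once, the divisibility being obtained by a routine adjustment of the $s_j$.
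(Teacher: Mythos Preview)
Your construction is the paper's: the authors keep the original step sequence $(s_j)$ and, for each $j$, pick $n_j>j$ with $s_js_{n_j}h_{n_j}>j$, then set $\ell_j:=s_j/s_{n_j}$, $\tilde E_j:=\bigsqcup_{i<\ell_j}T_{s_{n_j}}^iE_{n_j}$, $\tilde h_j:=[h_{n_j}/\ell_j]$. Up to relabelling (your $j_{k-1}\leftrightarrow j$, $j_k\leftrightarrow n_j$, $m_k\leftrightarrow\ell_j$) this is exactly your grouping of $m_k$ consecutive levels with new step $s'_k=s_{j_{k-1}}$. So on the level of the construction and the verification of $s_j\to0$, $s_jh_j\to\infty$, $s_j^2h_j\to\infty$, and the divisibility, you and the paper agree.

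Where you diverge is the convergence to points, which the paper dismisses as ``easily checked'' and which you correctly flag as the heart of the matter. Your block-count estimate is right, but it leaves you with $m_k\,\mu(A\vartriangle T_{s_{j_k}}A)$ and $m_k\,\mu(A\vartriangle A_k)$; since $m_k=s_{j_{k-1}}/s_{j_k}$ \emph{grows} as you push $j_k$ further, ``choosing $(j_k)$ sparse enough'' works against you, not for you. Your suggested fix --- taking $A$ to be a union of levels of an earlier modified tower so that ``for $k$ large $A$ is genuinely $\xi_{j_k}$-measurable'' --- fails because the original towers $\xi_j$ are not assumed nested: a $\xi_{j_q}$-measurable set need not be $\xi_{j_k}$-measurable for $k>q$. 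The continuous-Rokhlin-tower alternative would work but presupposes an equivalence the paper alludes to without proof.

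The clean route (and presumably what the authors have in mind) bypasses block counting entirely: mollify along the flow. For fixed $\epsilon>0$ set $g_\epsilon:=\tfrac1\epsilon\int_0^\epsilon \ind A\circ T_{-s}\,ds$; then $\|g_\epsilon-g_\epsilon\circ T_t\|_\infty\le 2|t|/\epsilon$ and $\|g_\epsilon-\ind A\|_1\to0$ as $\epsilon\to0$. Now $\tilde\xi_j$ is coarser than $\xi_{n_j}$, so
\[
\|g_\epsilon-\EE[g_\epsilon|\tilde\xi_j]\|_1\le \|g_\epsilon-\EE[g_\epsilon|\xi_{n_j}]\|_1+\|\EE[g_\epsilon|\xi_{n_j}]-\EE[g_\epsilon|\tilde\xi_j]\|_1.
\]
The first term vanishes because $\xi_{n_j}\to$ points. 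For the second, the level-averages $h_m$ of $g_\epsilon$ satisfy $|h_m-h_{m+1}|\le 2s_{n_j}/\epsilon$, hence vary by at most $2\ell_js_{n_j}/\epsilon=2s_j/\epsilon$ across any block of $\ell_j$ consecutive levels, giving a bound $\le 2s_j/\epsilon\to0$. Letting $\epsilon\to0$ afterwards finishes the argument. This is what replaces your block-count step.
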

\begin{proof}
 Let $(s_j)$ and $(h_j)$ be given as in the definition. Recall that $h_js_j\to\infty$. 
For each $j$, let $n_j>j$ be a large enough integer such that $s_js_{n_j}h_{n_j} >j$. 
Define $\ell_j\egdef s_j/s_{n_j}\in\ZZ_+$. We consider the new partition
$$
\tilde \xi_j\egdef 
\left\{ \tilde E_{j}, T_{s_j}\tilde E_{j}, \cdots , T_{s_j}^{\tilde h_j-1}\tilde E_{j}, X\setminus \u_{i=0}^{\tilde h_j-1} T_{s_j}^i \tilde E_{j}\right\}
$$
where
$$
\tilde E_{j}\egdef \u_{i=0}^{\ell_j-1} T_{s_{n_j}}^i E_{n_j} 
$$
and $\tilde h_j\egdef [h_{n_j}/\ell_j]$.
One can easily check that $\tilde \xi_j$ still converges to the partition into points. Moreover we have 
$s_j^2\tilde h_j=s_j^2[h_{n_j}s_{n_j}/s_j]\to\infty$.
\end{proof}

\begin{lemma}[Choice Lemma for flows, abstract setting]
Let $\T$ be an arbitrary flow, and let $\nu$ be an ergodic invariant measure under the action of $\T$.
Let a family of measures $(\nu^k_j)$ satisfy the conditions:
\begin{itemize}
   \item There exist sequences $(d_j)$ and $(s_j)$ of positive numbers with $d_j\tend{j}{\infty}0$, $s_j/s_{j+1}$ is an integer for all $j$, and $s_j\tend{j}{\infty}0$, such that
for all measurable set $A$ and all $k,j$
\begin{equation}
 \label{eq:almost_invariance_by_Tsj}
\left|\nu^k_j(T_{s_j}A) - \nu^k_j(A)\right| < s_j\, d_j ;
\end{equation}
  \item There exists a family of positive numbers $(a_j^k)$ with $\sum_k a^k_j =1$ for all $j$, such that
\begin{equation}
 \label{eq:convergence_to_nu}
\sum_k a^k_j \nu^k_j \tend[w]{j}{\infty} \nu.
\end{equation}
\end{itemize}
Then there is a sequence $(k_j)$ such that
$\nu^{k_j}_j\tend[w]{j}{\infty}\nu$.
\end{lemma}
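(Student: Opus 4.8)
The plan is to extract, for each $j$, an index $k_j$ such that $\nu^{k_j}_j$ is close (in a suitable weak sense) to a $T_{s_j}$-invariant-on-average measure, then let $j\to\infty$ and identify the limit using ergodicity. The mechanism for the extraction is a Markov-inequality / averaging argument: since the convex combination $\sum_k a^k_j \nu^k_j$ converges weakly to $\nu$, "most" of the mass (measured by the weights $a^k_j$) must sit on indices $k$ for which $\nu^k_j$ is itself close to $\nu$ on any fixed finite collection of cylinders.

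More concretely, fix a countable basis of cylinder sets and an enumeration $C_1, C_2, \dots$, together with a summable sequence of weights realizing a metric $\rho$ for weak convergence on $\M_1(X)$, say $\rho(\alpha,\beta)=\sum_m 2^{-m}|\alpha(C_m)-\beta(C_m)|$. First I would observe that $\rho\bigl(\sum_k a^k_j\nu^k_j,\nu\bigr)\tend{j}{\infty}0$ by \eqref{eq:convergence_to_nu}, and that $\rho$ is affine-compatible in the sense that $\rho(\sum_k a^k_j \nu^k_j,\,\sum_k a^k_j\, \theta) \le \sum_k a^k_j\, \rho(\nu^k_j,\theta)$ for any fixed $\theta$. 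The difficulty is that $\nu$ is not itself a fixed target we can compare each $\nu^k_j$ to directly — instead I would compare to the *push-forwards* appearing in the almost-invariance hypothesis. Using \eqref{eq:almost_invariance_by_Tsj}, each $\nu^k_j$ is, up to an error controlled by $s_j d_j$ on each coordinate, invariant under $T_{s_j}$; iterating, $\nu^k_j$ is invariant up to error $\lesssim n\, s_j d_j$ under $T_{n s_j}$. Since $s_j/s_{j+1}\in\ZZ$ and $s_j\to 0$, the numbers $\{n s_j : n\in\ZZ, j\ge 1\}$ form (the closure contains) a dense subgroup $Q$ of $\RR$; this is exactly the setup for an argument in the spirit of Lemma~\ref{lemma:lambda=nu}.

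The main steps, in order, are then: (1) For each $j$, apply Markov's inequality to the nonnegative quantity $\sum_k a^k_j \rho(\nu^k_j,\nu)$; more carefully, since we cannot bound this directly, I would instead argue that for every fixed finite set of cylinders and every $\eps>0$, the $a^k_j$-mass of the set of indices $k$ with $|\nu^k_j(C_m)-\nu(C_m)|\ge\eps$ for some $m$ in the finite set tends to $0$ as $j\to\infty$ — this follows by a compactness/sliding argument: if not, pass to a subsequence along which a fixed positive mass of weight lives on "bad" indices whose measures $\nu^k_j$ have a common weak limit $\nu'\neq\nu$, contradicting \eqref{eq:convergence_to_nu} after extracting a weakly convergent subsequence of the restricted averages. (2) Choose, for each $j$, $k_j$ in the "good" set for the first $j$ cylinders with threshold $1/j$; by a diagonal argument this yields $\rho(\nu^{k_j}_j,\nu)\to 0$, i.e. $\nu^{k_j}_j\tend[w]{j}{\infty}\nu$. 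Step (1) — making the "bad mass vanishes" claim rigorous while the target $\nu$ is only a limit and not available at finite stage — is the main obstacle; the clean way around it is to note $\M_1(X)$ is compact metrizable, so any subsequence of $(\nu^k_j)_k$ along which the weights stay bounded below has weak limit points, and \eqref{eq:convergence_to_nu} forces every such limit point (weighted appropriately) to average to $\nu$, which by an extreme-point / ergodic-decomposition consideration pins down enough of the mass near $\nu$. The almost-invariance hypothesis \eqref{eq:almost_invariance_by_Tsj} is used to guarantee that the selected $\nu^{k_j}_j$, being $T_{s_j}$-almost-invariant with $s_j\to 0$, have all their weak limit points invariant under the dense subgroup $Q$, so that ergodicity of $\nu$ (via Lemma~\ref{lemma:lambda=nu} if densities were bounded, or directly by ergodicity of the $Q$-action on $(X,\nu)$) rules out any limit other than $\nu$ and reinforces the conclusion.
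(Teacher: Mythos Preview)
Your overall strategy is the paper's own: show that for each cylinder $B$ and each $\eps>0$ the ``bad'' mass $\sum_{k\in K_j} a^k_j$ (over indices $k$ with $|\nu^k_j(B)-\nu(B)|>\eps$) tends to $0$, then run a diagonal argument. The ingredients you list are all correct, but you have wired them together in the wrong order, and this leaves a genuine gap in step~(1).

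Specifically, you claim that a weak limit $\nu'\neq\nu$ of the restricted bad-mass averages ``contradicts \eqref{eq:convergence_to_nu}''. It does not: \eqref{eq:convergence_to_nu} only gives $\nu=b\nu'+(1-b)\nu''$ for some $\nu''$ and some $b\ge a>0$, and ergodicity of $\nu$ forces $\nu'=\nu$ only if $\nu'$ is known to be invariant under the flow. This is precisely where the almost-invariance hypothesis \eqref{eq:almost_invariance_by_Tsj} must enter --- applied to the \emph{bad-mass limit} $\nu'$, not (as you write in your last sentence) to the already-selected $\nu^{k_j}_j$. Once step~(2) succeeds you already have $\nu^{k_j}_j\to\nu$ by construction, so invoking almost-invariance there ``to rule out any limit other than $\nu$'' is circular. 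The paper's argument is: the bad-mass limit $\lambda$ satisfies $\lambda\le a^{-1}\nu$ (because $a^{-1}\sum_{k\in K_j}a^k_j\nu^k_j\le a^{-1}\sum_k a^k_j\nu^k_j\to a^{-1}\nu$), so $d\lambda/d\nu\le 1/a$ is bounded --- this answers your parenthetical ``if densities were bounded''. Then \eqref{eq:almost_invariance_by_Tsj} iterated $s_p/s_j$ times makes $\lambda$ invariant under every $T_{s_p}$, and Lemma~\ref{lemma:lambda=nu} gives $\lambda=\nu$, contradicting $\lambda(B)\le\nu(B)-\eps$. Move the almost-invariance and Lemma~\ref{lemma:lambda=nu} into the contradiction argument of step~(1), and drop the final sentence; then your sketch becomes the paper's proof.
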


\begin{proof}
 Given a cylinder set $B$, an integer $j\ge1$ and $\eps >0$,
we consider the sets $K_j$ of all integers $k$ such that
$$ \nu(B) - \nu^k_j (B) >\eps. $$
Suppose that the (sub)sequence $K_j$
satisfies the condition
$$\sum_{k\in K_j} a^k_j \geq a>0.$$
Let $\lambda$ be a limit point for the sequence of measures
$(\sum_{k\in K_j} a^k_j)^{-1}\sum_{k\in K_j} a^k_j \nu^k_j $. Then $\lambda\neq\nu$ since $\lambda(B)\le \nu(B)-\varepsilon$, but by~\eqref{eq:convergence_to_nu}, we have $\lambda\ll\nu$, and $d\lambda/d\nu\le 1/a$.
Moreover, the measure $\lambda$ is invariant by $T_{s_p}$ for all $p$. Indeed, for $j\ge p$, since $s_p/s_j$ is an integer, we get from~\eqref{eq:almost_invariance_by_Tsj} that
$$\left|\nu^k_j(T_{s_p}A) - \nu^k_j(A)\right| < s_p\, d_j\tend{j}{\infty}0.$$
By Lemma~\ref{lemma:lambda=nu}, it follows that $\lambda=\nu$. The contradiction shows that

$$\sum_{k\in K_j} a^k_j \to 0.$$
Thus, for all large enough $j$, most of the measures $\nu^k_j$ satisfy
$$| \nu^k_j (B) - \nu(B) |< \eps.$$
Let $\{B_1, B_2,\dots\}$ be the countable family of all cylinder sets. Using the diagonal method we find a sequence $k_j$ such that for each~$n$
$$| \nu^{k_j}_j (B_n) - \nu(B_n) | \tend{j}{\infty} 0,$$
\emph{i.e.} $\nu^{k_j}_j\tend[w]{j}{\infty}\nu$.
\end{proof}

\subsection*{Columns and fat diagonals in $X\times X$}
Assume that $\T$ is a rank-one flow defined on $X$, with a sequence $(\xi_j)$ of partitions as in Definition~\ref{def:rank-one flow}. 
For all $j$ and $|k|< h_j-1$, we define the sets $C^k_j\in X\times X$, called \emph{columns}:
$$ 
C^k_j\egdef \u_{\genfrac{}{}{0pt}{}{\scriptstyle 0\le r,\ell \le h_j-1}{\scriptstyle r-\ell = k}} T_{s_j}^{r}E_j\times T_{s_j}^\ell E_j .
$$
Given $0<\delta<1$, we consider the set
$$D^\delta_j\egdef\u_{ k=-[\delta h_j]}^{[\delta h_j]} C^k_j.$$
(See Figure~\ref{fig:columns}.)

\begin{figure}[htp]
 \centering
 \input{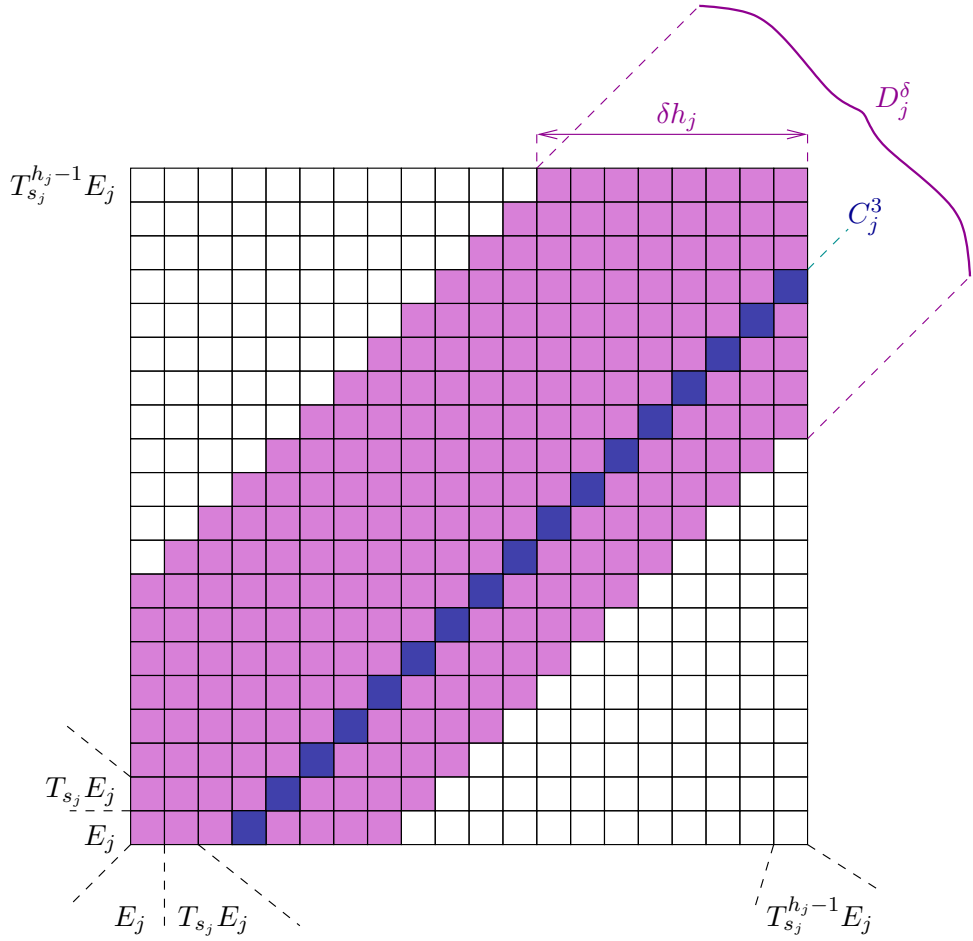}
 \caption{Columns and fat diagonals in $X\times X$}
 \label{fig:columns}
\end{figure}

\section{Approximation theorem}

Recall from Section~\ref{Sec:def} that, given a flow $\T$, $\Delta^t$ stands for the self-joining supported by the graph of $T_{-t}$.
\begin{lemma}
\label{Lemma:approximation}
 Let $\nu$ be an ergodic joining of the rank-one flow $\T$. Let $0<\delta<1$ be such that 
\begin{equation}
 \label{eq:Ddelta_j}
\ell_\delta\egdef\lim_j\nu(D^\delta_j)  > 0.
\end{equation}
Then there exists a sequence $(k_j)$ with $-\delta h_j\le k_j \le \delta h_j$ such that 
$$\Delta^{k_js_j}(  \, \cdot\,  | C_j^{k_j}) \tend[w]{j}{\infty} \nu.$$
\end{lemma}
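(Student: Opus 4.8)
The plan is to deduce this from the Choice Lemma for flows (abstract setting), applied to the product space $X\times X$ equipped with the diagonal flow $(T_t\times T_t)_{t\in\RR}$ and the measure $\nu$, which is ergodic for this flow precisely because $\nu$ is an \emph{ergodic} joining, so that Lemma~\ref{lemma:lambda=nu} is available in the product system. For $j$ large enough that $\nu(D^\delta_j)>0$ (possible by~\eqref{eq:Ddelta_j}) and for $k$ in the index set $K_j\egdef\{k:\ |k|\le[\delta h_j],\ \nu(C^k_j)>0\}$, put $\nu^k_j\egdef\Delta^{ks_j}(\,\cdot\,|\,C^k_j)$ and $a^k_j\egdef\nu(C^k_j)/\nu(D^\delta_j)$. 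These $\nu^k_j$ are well-defined probability measures since $\Delta^{ks_j}(C^k_j)=(h_j-|k|)\mu(E_j)\ge(1-\delta)h_j\mu(E_j)>0$, the $a^k_j$ are positive with $\sum_{k\in K_j}a^k_j=1$, and if the two hypotheses of the Choice Lemma hold it produces a sequence $(k_j)$ with $k_j\in K_j\subseteq\{|k|\le[\delta h_j]\}$ and $\nu^{k_j}_j\tend[w]{j}{\infty}\nu$, which is exactly the assertion. So everything reduces to checking~\eqref{eq:almost_invariance_by_Tsj} and~\eqref{eq:convergence_to_nu} for these data.

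Condition~\eqref{eq:almost_invariance_by_Tsj}, for the transformation $T_{s_j}\times T_{s_j}$, is the easy part. Since $\Delta^{ks_j}$ is $(T_{s_j}\times T_{s_j})$-invariant, for any measurable $A\subseteq X\times X$ we have $|\nu^k_j((T_{s_j}\times T_{s_j})A)-\nu^k_j(A)|\le\Delta^{ks_j}\bigl((T_{s_j}\times T_{s_j})^{-1}C^k_j\vartriangle C^k_j\bigr)/\Delta^{ks_j}(C^k_j)$, and inspecting the column shows that shifting it down by one level only drops its top rectangle and brings in one rectangle sitting at level $-1$, so the numerator equals $2\mu(E_j)$. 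Hence the left-hand side is at most $2/(h_j-|k|)\le 2/((1-\delta)h_j)<s_j d_j$ with $d_j\egdef 3/((1-\delta)s_j h_j)$, and $d_j\to0$ because $s_j h_j\to\infty$. This is the only place where the growth condition of Definition~\ref{def:rank-one flow} is used.

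Condition~\eqref{eq:convergence_to_nu}, namely $\sum_{k\in K_j}a^k_j\nu^k_j\tend[w]{j}{\infty}\nu$, is the heart of the proof. Write $\sum_k a^k_j\nu^k_j=\nu(D^\delta_j)^{-1}\sum_k\nu(C^k_j)\,\Delta^{ks_j}(\,\cdot\,|\,C^k_j)$ and compare the numerator to $\nu$ in two steps. \emph{(i)} For every $\xi_j\otimes\xi_j$-measurable set one has $\Delta^{ks_j}(\,\cdot\,|\,C^k_j)=\nu(\,\cdot\,|\,C^k_j)$: the $(T_t\times T_t)$-invariance of $\nu$ forces the $h_j-|k|$ rectangles composing $C^k_j$ to carry equal $\nu$-mass, while $\Delta^{ks_j}$ gives each of them mass $\mu(E_j)$, so in both cases the conditional mass of a $\xi_j\otimes\xi_j$-measurable set is the number of rectangles of $C^k_j$ it contains, divided by $h_j-|k|$. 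Approximating a cylinder $B=A_1\times A_2$ by $B_j\egdef A^j_1\times A^j_2$ with $A^j_i$ a union of tower levels and $\mu(A_i\vartriangle A^j_i)\to0$, and using that $\nu(B\vartriangle B_j)$ and $\Delta^{ks_j}(B\vartriangle B_j)$ are both bounded by a quantity $\eps_j\to0$ \emph{uniform in $k$} (via the marginals), one gets $\sum_k\nu(C^k_j)\,\Delta^{ks_j}(\,\cdot\,|\,C^k_j)-\nu(\,\cdot\,\cap D^\delta_j)\to 0$ weakly; here one also uses $\sum_k\nu(C^k_j)=\nu(D^\delta_j)\le1$ and $h_j\mu(E_j)\to1$. \emph{(ii)} $\nu(\,\cdot\,\cap D^\delta_j)\tend[w]{j}{\infty}\ell_\delta\,\nu$: the symmetric difference $(T_{s_j}\times T_{s_j})^{-1}D^\delta_j\vartriangle D^\delta_j$ has $\nu$-mass $O(\mu(E_j))=O(1/h_j)$ (again only extreme rectangles of the columns are affected, and they share a coordinate), so, multiplying by the integer $s_p/s_j$, the sets $D^\delta_j$ are asymptotically $(T_{s_p}\times T_{s_p})$-invariant for each fixed $p$; consequently every weak limit of $\nu(\,\cdot\,\cap D^\delta_j)$, divided by its total mass $\ell_\delta$, is a probability measure invariant under the dense subgroup of $\RR$ generated by the $s_p$'s and absolutely continuous with respect to $\nu$ with density $\le 1/\ell_\delta$, hence equal to $\nu$ by Lemma~\ref{lemma:lambda=nu}. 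Combining (i) and (ii) gives $\sum_k a^k_j\nu^k_j=\nu(D^\delta_j)^{-1}\bigl(\nu(\,\cdot\,\cap D^\delta_j)+o(1)\bigr)\to\ell_\delta^{-1}\ell_\delta\,\nu=\nu$, and the Choice Lemma concludes.

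The main obstacle is step (ii): it is the only point where ergodicity of the joining $\nu$ is genuinely exploited, and it needs a little care with the topology, since the sets $D^\delta_j$ are merely measurable --- one invokes the remark in Section~\ref{Sec:def} that weak convergence of measures with uniformly bounded marginal densities improves to convergence on all measurable rectangles, together with Lemma~\ref{lemma:lambda=nu}. By contrast step (i), although it looks like the subtle point, reduces to a one-line counting argument once one notices that the rectangles of a single column are $\nu$-equiprobable.
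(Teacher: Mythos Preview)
Your proof is correct and follows essentially the same route as the paper's: you apply the Choice Lemma with the very same data $\nu^k_j=\Delta^{ks_j}(\,\cdot\,|\,C^k_j)$ and $a^k_j=\nu(C^k_j|D^\delta_j)$, verify almost-invariance~\eqref{eq:almost_invariance_by_Tsj} from the column structure, and prove~\eqref{eq:convergence_to_nu} by the same two ingredients --- the identity $\nu(\,\cdot\,|\,C^k_j)=\Delta^{ks_j}(\,\cdot\,|\,C^k_j)$ on $\xi_j\otimes\xi_j$-measurable sets (your equiprobability remark is exactly the paper's $M_2=0$), and $\nu(\,\cdot\,|D^\delta_j)\to\nu$ via asymptotic $(T_{s_p}\times T_{s_p})$-invariance of $D^\delta_j$ and Lemma~\ref{lemma:lambda=nu}. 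The only differences are cosmetic: you carry $\nu(\,\cdot\,\cap D^\delta_j)$ rather than $\nu(\,\cdot\,|D^\delta_j)$, and you make the constant in $d_j$ explicit.
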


\begin{proof}
Our strategy is the following: First we prove that the joining $\nu$ can be approximated
by sums of parts of off-diagonal measures, then applying the Choice Lemma
we find a sequence of parts tending to $\nu$.

By definition of $D_j^{\delta}$, we have 
$$ \nu\Bigl(  D^\delta_j\,\vartriangle\,(T_{s_j}\times T_{s_j})D^\delta_j\Bigr) \le \dfrac{C}{h_j}. $$
It follows that for any fixed $p$, the sets $D^\delta_j$ are asymptotically $T_{s_p}\times T_{s_p}$-invariant: Indeed, since $T_{s_p}=T_{s_j}^{s_p/s_j}$ where $s_p/s_j$ is an integer when $j\ge p$, we get
$$ \nu\Bigl(  D^\delta_j\,\vartriangle\,(T_{s_p}\times T_{s_p})D^\delta_j\Bigr) \le \dfrac{s_p}{s_j} \dfrac{C}{h_j} \tend{j}{\infty}0$$
(recall that $s_jh_j\to\infty$). 

Let $\lambda$ be a limit measure of $\nu(\,\cdot\, |\ D^\delta_j)$. 
Then $\lambda$ is $T_{s_p}\times T_{s_p}$-invariant for each $p$, by~\eqref{eq:Ddelta_j}, $\lambda$ is absolutely continuous with respect to $\nu$, and $\frac{d\lambda}{d\nu}\le\frac{1}{\ell_\delta}<\infty$. By Lemma~\ref{lemma:lambda=nu}, it follows that $\lambda=\nu$. Hence we have
\begin{equation}
 \label{eq:convergence of nu}
\nu( \,\cdot\, |\ D^\delta_j)\tend[w]{j}{\infty} \nu.
\end{equation}

We now prove that 
\begin{equation}
 \label{eq:convergence of Delta^k}
\sum_{k=-[\delta h_j]}^{[\delta h_j]} \nu(C^k_j | D^\delta_j) \Delta^{k s_j}(\,\cdot\, | C^k_j)\tend[w]{j}{\infty} \nu.
\end{equation}

 For arbitrary measurable sets $A,B$ we can find
 $\xi_j$-measurable sets $A_j,B_j$ such that
$$\eps_j\egdef\mu(A\vartriangle A_j) +\mu(B\vartriangle B_j) \to 0.$$
We have
$$ \sum_k\nu(C^k_j | D^\delta_j) \Delta^{k s_j}(A\times B| C^k_j)-\nu(A\times B) = M_1+M_2+M_3+M_4, $$
where
\begin{align*}
 & M_1\egdef\sum_k\nu(C^k_j | D^\delta_j) \left(\Delta^{k s_j}(A\times B| C^k_j) -  \Delta^{k s_j}(A_j\times B_j| C^k_j)\right),\\ 
 & M_2\egdef \sum_k\nu(C^k_j | D^\delta_j) \Delta^{k s_j}(A_j\times B_j| C^k_j) - \nu (A_j\times B_j | D^\delta_j),\\
 & M_3\egdef \nu (A_j\times B_j | D^\delta_j) - \nu (A\times B | D^\delta_j), \\
 & M_4\egdef \nu (A\times B | D^\delta_j) - \nu(A\times B). 
\end{align*}
The density of the projections of the measure $\Delta^{k s_j}(\,\cdot\, | C^k_j)$ with respect to $\mu$
is bounded by $(1-\delta)^{-1}$. Hence $ M_1 \le \eps_j / (1-\delta)$. 

Since $A_j,B_j$ are $\xi_j$-measurable, 
$$\nu(A_j\times B_j | C^k_j)= \Delta^{k s_j} (A_j\times B_j | C^k_j),$$
and we get $M_2=0$.

The absolute value of the third term $M_3$ can be bounded above as follows
$$|M_3|\le \nu(D^\delta_j)^{-1} \nu\Bigl((A_j\times B_j)\vartriangle (A\times B)\Bigr) \le \frac{\eps_j}{\nu(D^\delta_j)}\to 0.$$ 

The last term $M_4$ goes to zero as $j\to\infty$ by~\eqref{eq:convergence of nu}, and this ends the proof of~\eqref{eq:convergence of Delta^k}.

To apply the Choice Lemma for the measures 
$\nu^k_j =\Delta^{ks_j}( \,\cdot\,| C_j^k)$ and $a_j^k = \nu(C^k_j | D^\delta_j)$, 
it remains to check the first hypothesis of the lemma. 
By construction of the columns $C_j^k$, we have for any measurable subset $A\in X\times X$ and all $k\in\{-[\delta h_j],\ldots,[\delta h_j]\}$,
\begin{equation}
\label{eq:invariance for Cjk}
 \left|\Delta^{ks_j}( T_{s_j}\times T_{s_j} A  | C_j^k) - \Delta^{ks_j}( A| C_j^k)\right| < \dfrac{C}{h_j}
\end{equation}
where $C$ is a constant. We get the desired result by setting $d_j\egdef \dfrac{C}{s_j h_j}$. 

The Choice Lemma then gives a sequence $(k_j)$ with $-\delta h_j\le k_j \le \delta h_j$ such that $\Delta^{k_js_j}(  \, \cdot\,  | C_j^{k_j}) \tend[w]{j}{\infty} \nu.$
\end{proof}

\begin{theo}
\label{thm:1.2}
 Let  a flow $T=\T$ be of rank-one and $\nu$ be an ergodic self-joining of $\T$.
Then there is a sequence $(k_j)$ such that
$\Delta^{k_js_j} \tend[w]{j}{\infty} \frac{1}{2}\nu +\frac{1}{2}\nu'$
for some self-joining $\nu'$: For all measurable sets $A,B$
$$\mu(A\cap T^{k_j}_{s_j}B) \to \frac{1}{2}\nu(A\times B) +
\frac{1}{2}\nu'(A\times B).$$
\end{theo}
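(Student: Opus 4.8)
The idea is to exploit the dichotomy that for each scale $j$, the "fat diagonal" $D^\delta_j$ either carries a positive proportion of the joining $\Delta^0 = \Delta$ in the limit, or it does not, and to play this off against the basic compactness of the space of self-joinings. First I would note that the candidate sequence $(\Delta^{k_j s_j})_j$ of self-joinings lives in a compact metrizable space, so along a subsequence it converges to some self-joining $\rho$; the whole problem is to arrange that $\rho = \tfrac12\nu + \tfrac12\nu'$ for a suitable $\nu'$, equivalently that $\rho \ge \tfrac12 \nu$ as measures. Apply Lemma \ref{Lemma:approximation} to the \emph{diagonal} self-joining $\Delta$: since $\Delta(D^\delta_j)\to 1$ for every $\delta>0$ (the diagonal sits inside $C^0_j \subset D^\delta_j$ up to an error $O(1/h_j)$), the hypothesis \eqref{eq:Ddelta_j} holds trivially, so we already know $\Delta^{k_j s_j}(\cdot \mid C^{k_j}_j)$ can be made to converge to $\Delta$ itself for an appropriate choice — but that is the wrong target. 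The real input comes from applying Lemma \ref{Lemma:approximation} to the \emph{given} ergodic self-joining $\nu$.

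So the main step is: choose $\delta$ small enough that $\ell_\delta = \lim_j \nu(D^\delta_j) > 0$ — here one should check this holds for \emph{every} $\delta>0$ with $\ell_\delta \to 1$ as $\delta \downarrow 0$ (by invariance of $\nu$ under $T_{s_j}\times T_{s_j}$ asymptotically, as in the proof of Lemma \ref{Lemma:approximation}, $\nu(D^\delta_j)$ is, up to vanishing error, the $\nu$-measure of a $T\times T$-almost-invariant set, and these fat diagonals increase to the full space). Lemma \ref{Lemma:approximation} then yields a sequence $(k_j)$, $-\delta h_j \le k_j \le \delta h_j$, with $\Delta^{k_j s_j}(\cdot \mid C^{k_j}_j) \to \nu$ weakly. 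Now decompose, for each $j$,
$$ \Delta^{k_j s_j} = \Delta^{k_j s_j}(C^{k_j}_j)\,\Delta^{k_j s_j}(\cdot \mid C^{k_j}_j) + \bigl(1 - \Delta^{k_j s_j}(C^{k_j}_j)\bigr)\,\Delta^{k_j s_j}(\cdot \mid (C^{k_j}_j)^c). $$
The weight $\Delta^{k_j s_j}(C^{k_j}_j) = \mu\bigl(E_j \cap T_{s_j}^{k_j} T_{s_j}^{-k_j} E_j\bigr)$-type quantity: actually $\Delta^{k_j s_j}(C^{k_j}_j)$ is the proportion of the column $C^{k_j}_j$ inside the full product, which is $(h_j - |k_j|)\mu(E_j) \approx (1-|k_j|/h_j)\,h_j\mu(E_j)$, and $h_j \mu(E_j) \to 1$ since $\xi_j \to \varepsilon$. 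Hence $\Delta^{k_j s_j}(C^{k_j}_j) \to \theta$ along a further subsequence for some $\theta = 1 - \lim |k_j|/h_j \in [1-\delta, 1]$. Passing to a convergent subsequence of the complementary conditional measures as well, we obtain $\Delta^{k_j s_j} \to \theta \nu + (1-\theta)\eta$ for some self-joining $\eta$.

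The remaining task is to push $\theta$ down to exactly $\tfrac12$, and \emph{this is the main obstacle}: a priori we only control $\theta \in [1-\delta,1]$, which is close to $1$, not $\tfrac12$. The resolution is to iterate/amplify: replace $k_j$ by $2k_j$ (legitimate since $-2\delta h_j \le 2k_j \le 2\delta h_j$ and we may assume $2\delta < 1$), and observe that $\Delta^{2k_j s_j} = (\Delta^{k_j s_j})$ "composed with itself" in the sense that $\mu(A \cap T^{2k_j}_{s_j} B) = \int \mathbb{E}[\ind{A}\mid \cdot] \cdots$ — more precisely one uses that the column structure for $C^{2k_j}_j$ realizes $\Delta^{2k_j s_j}(\cdot \mid C^{2k_j}_j)$ as still converging to $\nu$ (this needs $|2k_j| \le \delta' h_j$ and re-running Lemma \ref{Lemma:approximation}), while the weight halves roughly to $1 - 2\lim|k_j|/h_j$. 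By choosing the multiplier $m_j$ with $m_j |k_j| \approx \tfrac12 h_j$ — i.e. $m_j = \lfloor h_j / (2|k_j|)\rfloor$ when $k_j \neq 0$, handling the case $k_j = 0$ (i.e. $\nu = \Delta$) separately where the statement is immediate with $\nu' = \Delta$ — one forces $\Delta^{m_j k_j s_j}(C^{m_j k_j}_j) \to \tfrac12$. One must recheck that $m_j k_j s_j$ still gives conditional measures converging to $\nu$: since $\Delta^{\ell s_j}(\cdot \mid C^\ell_j)$ depends on $\ell$ only through the rigidity of the tower and $m_j k_j \le \tfrac12 h_j + |k_j| \le \delta'' h_j$, the argument of Lemma \ref{Lemma:approximation} applies verbatim with $\delta''<1$. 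Then $\Delta^{m_j k_j s_j} \to \tfrac12 \nu + \tfrac12 \nu'$ with $\nu' \egdef \lim_j \Delta^{m_j k_j s_j}(\cdot \mid (C^{m_j k_j}_j)^c)$ along a subsequence, which is a self-joining (it has marginals $\mu$ because each $\Delta^{\ell s_j}$ does and the $\tfrac12\nu$ part already accounts for half of each marginal, forcing $\nu'$ to have marginals $\mu$). Renaming $k_j \egdef m_j k_j$ gives exactly the asserted statement.
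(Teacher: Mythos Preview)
Your argument has two genuine problems, one of orientation and one of substance.

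\textbf{The direction of $\delta$.} The sets $D^\delta_j = \bigsqcup_{|k|\le[\delta h_j]} C^k_j$ are \emph{increasing} in $\delta$, so $\ell_\delta$ decreases as $\delta\downarrow 0$, not the other way around. There is no reason to expect $\ell_\delta>0$ for small $\delta$; the only a priori lower bound comes from the marginal constraint: the complement of $D^\delta_j$ in the tower product projects (on each coordinate) into a set of $\mu$-measure at most $1-\delta$, whence $\nu(D^\delta_j)\ge 1-2(1-\delta)=2\delta-1$. This is positive exactly when $\delta>1/2$. The paper therefore applies Lemma~\ref{Lemma:approximation} for each $\delta>1/2$ and lets $\delta_j\searrow 1/2$ via a diagonal argument, obtaining $|k_j|\le\delta_j h_j$ and hence $\liminf_j\Delta^{k_js_j}(C_j^{k_j})\ge 1/2$.

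\textbf{The unnecessary (and broken) amplification step.} You try to force the weight $\theta=\lim_j\Delta^{k_js_j}(C_j^{k_j})$ to equal $\tfrac12$ exactly by replacing $k_j$ with $m_jk_j$. This is both unnecessary and unjustified. It is unnecessary because once $\theta\ge\tfrac12$ one simply writes
\[
\theta\,\nu+(1-\theta)\,\eta \;=\; \tfrac12\,\nu+\tfrac12\bigl((2\theta-1)\nu+2(1-\theta)\eta\bigr),
\]
and the bracketed measure is a self-joining $\nu'$. It is unjustified because Lemma~\ref{Lemma:approximation} produces a \emph{specific} sequence $(k_j)$ via the Choice Lemma; nothing in its proof says that $\Delta^{\ell s_j}(\,\cdot\mid C_j^{\ell})\to\nu$ for \emph{all} $\ell$ with $|\ell|\le\delta'' h_j$, only for the chosen $k_j$. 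Replacing $k_j$ by $m_jk_j$ destroys the conclusion, and your sentence ``the argument of Lemma~\ref{Lemma:approximation} applies verbatim'' hides precisely the step that fails.

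The clean route is the paper's: take $\delta_j\searrow\tfrac12$, apply Lemma~\ref{Lemma:approximation} at each scale, diagonalize, and absorb the slack $\theta-\tfrac12$ into $\nu'$.
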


\begin{proof}

For any $1/2<\delta<1$, we have
\begin{equation*}
\lim_{j\to\infty}\nu(D^\delta_j) > 1 -2(1-\delta) =2\delta -1 > 0.
\end{equation*}
Hence we can apply Lemma~\ref{Lemma:approximation} for any $1/2<\delta<1$.
By a diagonal argument, we get the existence of $(k_j)$ and $(\delta_j)\searrow \frac{1}{2}$ with $-\delta_j h_j\le k_j \le \delta_j h_j$ such that 
$$\Delta^{k_js_j}\left( \, \cdot\, | C_j^{k_j}\right) \tend[w]{j}{\infty}\nu. $$
Let us decompose $\Delta^{k_js_j}$ as
$$
\Delta^{k_js_j}=\Delta^{k_js_j}\left( \, \cdot\, | C_j^{k_j}\right) \Delta^{k_js_j}( C_j^{k_j}) + \Delta^{k_js_j}\left( \, \cdot\, | X\times X\setminus C_j^{k_j}\right) \left(1-\Delta^{k_js_j}( C_j^k)\right).
$$
Since $\liminf_{j\to\infty}\Delta^{k_js_j}( C_j^{k_j})\ge 1/2$, we get the existence of some self-joining $\nu'$ such that
$$\Delta^{k_js_j}\tend[w]{j}{\infty} \frac{1}{2}\nu +\frac{1}{2} \nu'.$$
\end{proof}

\begin{corollary}
A mixing rank-one flow has minimal self-joinings of order two.
\end{corollary}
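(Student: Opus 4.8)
The plan is to read the corollary off Theorem~\ref{thm:1.2}, the mixing hypothesis, and the elementary fact that an ergodic measure is an extreme point of the simplex of invariant probability measures. Let $\nu$ be an ergodic $2$-fold self-joining of the mixing rank-one flow $T=\T$; the goal is to show that $\nu$ is either the product measure $\mu\times\mu$ or a graph joining $\Delta^t$ with $t\in\RR$. By Theorem~\ref{thm:1.2} there are a sequence $(k_j)$ and a self-joining $\nu'$ with
$$\Delta^{k_js_j}\tend[w]{j}{\infty}\frac{1}{2}\nu+\frac{1}{2}\nu'.$$
Set $t_j\egdef k_js_j\in\RR$. Passing to a subsequence, I may assume that either $t_j$ converges to some $t\in\RR$, or $|t_j|\tend{j}{\infty}\infty$.

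In the first regime, suppose $|t_j|\to\infty$. Mixing of $T$ then gives $\mu(A\cap T_{t_j}B)\to\mu(A)\mu(B)$ for all measurable $A,B$, that is $\Delta^{t_j}\tend[w]{j}{\infty}\mu\times\mu$, so $\frac{1}{2}\nu+\frac{1}{2}\nu'=\mu\times\mu$. Since $T$ is mixing, so is $T\times T$, hence $\mu\times\mu$ is ergodic for $T\times T$ and is thus an extreme point of the simplex of $T\times T$-invariant probability measures on $X\times X$; therefore $\nu=\nu'=\mu\times\mu$.

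In the second regime, suppose $t_j\to t$. By continuity of the flow, $\mu(T_{t_j}B\vartriangle T_tB)\to0$, whence $\mu(A\cap T_{t_j}B)\to\mu(A\cap T_tB)$ for all measurable $A,B$, i.e. $\Delta^{t_j}\tend[w]{j}{\infty}\Delta^t$, so $\frac{1}{2}\nu+\frac{1}{2}\nu'=\Delta^t$. The graph joining $\Delta^t$ is carried by the graph of the automorphism $T_{-t}$, on which $T\times T$ acts as a copy of the ergodic flow $T$, so $\Delta^t$ is ergodic and hence extreme; therefore $\nu=\nu'=\Delta^t$. In either case $\nu\in\{\mu\times\mu\}\cup\{\Delta^t:t\in\RR\}$, which is the assertion that $T$ has minimal self-joinings of order two. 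Moreover, applying this to $\nu=\Delta_S$ for an arbitrary $S$ in the centralizer of $T$ forces $S=T_t$ for some $t\in\RR$, so as a byproduct the centralizer of $T$ reduces to $\T$.

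I do not expect a genuine obstacle here: the substantive work is already done in Theorem~\ref{thm:1.2}, and what remains is only the observation that for a mixing flow the limiting measure $\frac{1}{2}\nu+\frac{1}{2}\nu'$ is always an extreme self-joining. The sole technical point is that each of the weak convergences above is a priori only tested on cylinder sets and must be upgraded to arbitrary measurable $A,B$; this is harmless, since all the measures involved have both marginals equal to $\mu$, so the remark recorded in Section~\ref{Sec:def} applies (and mixing of $T$ is, in any case, customarily stated directly for measurable sets).
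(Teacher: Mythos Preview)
Your argument is correct and follows essentially the same route as the paper's proof: apply Theorem~\ref{thm:1.2}, split into the cases $|k_js_j|\to\infty$ and $k_js_j\to t$ along a subsequence, identify the weak limit of $\Delta^{k_js_j}$ as $\mu\times\mu$ (by mixing) or $\Delta^t$ (by continuity of the flow), and conclude by extremality of these ergodic self-joinings. The additional remarks on the centralizer and on upgrading weak convergence from cylinder sets to measurable sets are fine but not needed for the corollary as stated.
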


\begin{proof}
 Let $\nu$ be an ergodic self-joining of a mixing rank-one flow $\T$. Let $(k_j)$ be the sequence given by Theorem~\ref{thm:1.2}. If $|k_js_j|\to\infty$, since $T$ is mixing we have 
$$\Delta^{k_js_j}\tend[w]{j}{\infty} \mu\times\mu, $$
hence $\mu\times\mu=\frac{1}{2}\nu +\frac{1}{2} \nu'$ for some self-joining $\nu'$. The ergodicity of $\mu\times\mu$ then implies that $\mu\times\mu=\nu$.  
Otherwise, along some subsequence we have $k_js_j\to s$ for some real number $s$. Then 
$\Delta^s=\frac{1}{2}\nu +\frac{1}{2} \nu'$ for some self-joining $\nu'$, and again the ergodicity of $\Delta^s$ yields $\nu=\Delta^s$. Thus $T$ has minimal self-joinings of order two..
\end{proof}

\section{Weak Closure Theorem for rank-one flows}

\begin{lemma}[Weak Closure Lemma]
\label{lemma:WCLemma}
If the automorphism $S$ commutes with the rank-one flow $\T$, then there exist $1/2\le d \le 1$, a sequence $(k_j)$ of integers and a sequence of measurable sets $(Y_j)$ such that, for all measurable sets $A,B$
$$ \mu(A\cap T^{k_j}_{s_j}B \cap Y_j) \to d\, \mu(A\cap SB),$$
where $Y_j$ has the form
$$Y_j^{d,-} \egdef\u_{0\le i< d h_j}T_{s_j}^i E_j\quad\text{or}\quad
Y_j^{d,+} \egdef\u_{(1-d) h_j < i \le h_j}T_{s_j}^i E_j.$$ 
\end{lemma}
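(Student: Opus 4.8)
The plan is to run the machinery of the previous section on the self‑joining $\Delta_S$ and then to extract from the weak convergence of conditional measures the sharper statement involving a block of consecutive tower levels. The conceptual crux is to notice that $\Delta_S$ is an \emph{ergodic} self‑joining of $\T$: since $S$ commutes with the flow, the map $\phi\colon x\mapsto(x,S^{-1}x)$ satisfies $\phi\circ T_t=(T_t\times T_t)\circ\phi$ and $\phi_*\mu=\Delta_S$, so $(X\times X,\Delta_S)$ with the diagonal action is isomorphic to $(X,\mu,\T)$, which is ergodic because a rank‑one flow is ergodic. Hence the proof of Theorem~\ref{thm:1.2} goes through with $\nu=\Delta_S$ — the hypothesis of Lemma~\ref{Lemma:approximation} is met for every $1/2<\delta<1$ since $\liminf_j\Delta_S(D^\delta_j)\ge 2\delta-1>0$ (this uses only that both marginals of $\Delta_S$ are $\mu$ and that $h_j\mu(E_j)\to1$, i.e.\ that the remainder set has measure $\to0$). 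So, by Lemma~\ref{Lemma:approximation} and a diagonal argument over $\delta_j\searrow 1/2$, one obtains integers $k_j$ with $-\delta_jh_j\le k_j\le\delta_jh_j$, $\liminf_j\Delta^{k_js_j}(C^{k_j}_j)\ge\tfrac12$, and
$$\Delta^{k_js_j}(\,\cdot\,\mid C^{k_j}_j)\tend[w]{j}{\infty}\Delta_S.$$

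Next I would make the geometry of the column explicit. Since $\Delta^{k_js_j}$ is carried by the graph of $T_{-k_js_j}=T_{s_j}^{-k_j}$, inspection of the definition of $C^{k_j}_j$ shows that, with
$$Y_j\egdef\begin{cases}\u_{i=k_j}^{h_j-1}T_{s_j}^iE_j,& k_j\ge0,\\[1ex]\u_{i=0}^{h_j-1-|k_j|}T_{s_j}^iE_j,& k_j<0,\end{cases}$$
one has, for all measurable $A,B$,
$$\Delta^{k_js_j}(A\times B\mid C^{k_j}_j)=\frac{\mu\bigl(A\cap T_{s_j}^{k_j}B\cap Y_j\bigr)}{\mu(Y_j)},\qquad\mu(Y_j)=\Delta^{k_js_j}(C^{k_j}_j)=(h_j-|k_j|)\,\mu(E_j).$$
The marginals of $\Delta^{k_js_j}(\,\cdot\,\mid C^{k_j}_j)$ have density at most $1/\mu(Y_j)$ with respect to $\mu$, and $\liminf_j\mu(Y_j)\ge\tfrac12$, so this bound is uniform; by the observation on weak convergence recalled in Section~\ref{Sec:def}, $\Delta^{k_js_j}(A\times B\mid C^{k_j}_j)\to\Delta_S(A\times B)=\mu(A\cap SB)$ for \emph{all} $A,B\in\A$. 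Passing to a subsequence so that $\mu(Y_j)\to d\in[\tfrac12,1]$ and so that $k_j$ keeps a constant sign, and multiplying the two convergences, gives $\mu(A\cap T_{s_j}^{k_j}B\cap Y_j)\to d\,\mu(A\cap SB)$.

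It remains to replace $Y_j$ by one of the rigidly prescribed blocks $Y_j^{d,\pm}$, and this is the only step that needs some care. From $(h_j-|k_j|)\mu(E_j)\to d$ together with $h_j\mu(E_j)\to1$ one gets $|k_j|/h_j\to 1-d$, hence $\bigl|\,|k_j|-(1-d)h_j\,\bigr|=o(h_j)$; since $\mu(E_j)\sim 1/h_j$, the set $Y_j$ above differs from $Y_j^{d,+}$ (when $k_j\ge0$) or from $Y_j^{d,-}$ (when $k_j<0$) by a set of $\mu$‑measure tending to $0$. Replacing $Y_j$ by that set changes $\mu(A\cap T_{s_j}^{k_j}B\cap Y_j)$ by at most $\mu(Y_j\vartriangle Y_j^{d,\pm})\to0$, so the limit is unaffected; finally, re‑indexing the subsequence — which is again a rank‑one tower sequence — yields the statement. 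Everything beyond the identification of $\Delta_S$ as an ergodic joining is a transcription of what was already done for Theorem~\ref{thm:1.2}, the one genuinely new point being the matching of the level‑block produced by the column with the prescribed $Y_j^{d,\pm}$, which reduces to the elementary estimate $|k_j|/h_j\to1-d$ just noted.
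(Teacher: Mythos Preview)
Your proof is correct and follows essentially the same route as the paper's: apply the machinery of Theorem~\ref{thm:1.2} to the ergodic self-joining $\nu=\Delta_S$, identify the first-coordinate projection $Y_j$ of the column $C_j^{k_j}$, and use $|k_j|/h_j\to 1-d$ to replace $Y_j$ by the prescribed block $Y_j^{d,\pm}$ up to a set of vanishing measure. You add a little more detail than the paper (the explicit justification that $\Delta_S$ is ergodic, and the appeal to the bounded-density remark from Section~\ref{Sec:def} to upgrade weak convergence to all measurable $A,B$), but the argument is the same.
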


\begin{proof}
 This lemma is a consequence of the proof of Theorem~\ref{thm:1.2}, when the joining $\nu$ is equal to $\Delta_S$. 
Given a sequence $(\delta_j)\searrow \frac{1}{2}$, the proof provides a sequence $(k_j)$ where $-\delta_j h_j\le k_j \le \delta_j h_j$, such that $\Delta^{k_js_j}(  \, \cdot\,  | C_j^{k_j}) \tend[w]{j}{\infty} \Delta_S$, and $\Delta^{k_js_j}(C_j^{k_j})$ converges to some number $d\ge 1/2$. Let $Y_j^{k_j}$ be the projection on the first coordinate of $C_j^{k_j}$, that is 
$$ Y^{k_j}_j=\begin{cases}
              \u_{i=k_j}^{h_j} T_{s_j}^iE_j & \text{ if } k_j\ge 0\\
              \u_{i=0}^{h_j+k_j} T_{s_j}^iE_j & \text{ if } k_j< 0.\\
             \end{cases}
$$
We then have $\Delta^{k_js_j}(  \, \cdot\,  | C_j^{k_j}) = \Delta^{k_js_j}(  \, \cdot\,  | Y_j^{k_j}\times X)$, and $\mu(Y_j^{k_j}) = \Delta^{k_js_j}(C_j^{k_j}) \to d$. 
This yields, for all measurable sets $A,B$,
$$ \mu(A\cap T^{k_j}_{s_j}B \cap Y_j^{k_j}) \to d\, \mu(A\cap SB).$$
If there exist infinitely many $j$'s such that $k_j\ge 0$, then along this subsequence, we have 
$$ \mu\left(Y_j^{k_j}\vartriangle Y_j^{d,+}\right) \tend{j}{\infty} 0, $$
since $(h_j-k_j)/h_j\to d$. A similar result holds along the subsequence of $j$'s such that $k_j<0$, with $Y_j^{d,+}$ replaced by $Y_j^{d,-}$. 
\end{proof}

\begin{theo}[Weak Closure Theorem for rank-one flows]
\label{thm:WC}
If the automorphism $S$ commutes with the rank-one flow $\T$, then there exists a sequence of integers $(k_j)$ such that $\Delta^{k_js_j}\to \Delta_S$: For all measurable sets $A,B$, 
$$ \mu(A\cap T^{k_j}_{s_j}B) \to \mu(A\cap SB).$$
\end{theo}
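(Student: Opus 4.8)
The plan is to leverage the Weak Closure Lemma (Lemma~\ref{lemma:WCLemma}), which already gives us sequences $(k_j)$, measurable sets $(Y_j)$, and a constant $d\in[1/2,1]$ with $\mu(A\cap T^{k_j}_{s_j}B\cap Y_j)\to d\,\mu(A\cap SB)$, where $Y_j$ is either $Y_j^{d,-}$ or $Y_j^{d,+}$. The only obstruction to the full theorem is the possible ``loss of mass'' $1-d$: we must show $d=1$, i.e. that the complementary part $\mu(A\cap T^{k_j}_{s_j}B\cap Y_j^c)$ does not carry a positive share of the joining to some genuinely different limit. First I would pass to a further subsequence along which $\Delta^{k_js_j}$ converges weakly; by the argument at the end of the proof of Theorem~\ref{thm:1.2}, the limit is of the form $d\,\Delta_S+(1-d)\nu'$ for some self-joining $\nu'$, and moreover (refining with a diagonal argument over $\delta_j\searrow 1/2$) I would arrange that $\nu'$ is supported off the relevant column, i.e. it is a limit of $\Delta^{k_js_j}(\,\cdot\,|X\times X\setminus C_j^{k_j})$.

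The key new idea is to exploit that $S$ commutes with the \emph{whole flow}, not just with a single transformation, and in particular with $T_t$ for small $t$. I would fix a small real $t$ and consider $\Delta_{T_tS}=\Delta^t\ast\Delta_S$ (composition of joinings, equivalently the joining on the graph of $(T_tS)^{-1}$). Since $T_t$ is close to the identity for small $t$, $\Delta_{T_tS}$ is close to $\Delta_S$ in the weak topology; more usefully, I would run the Weak Closure Lemma machinery for $\Delta_{T_tS}$ as well, or alternatively translate the sets $Y_j$ by $T_{s_j}^{\pm1}$ using that $s_j\to0$. The point is that the ``defect set'' $Y_j^c$ in the tower-picture is, up to a boundary of relative size $O(1/h_j)$, the top or bottom $(1-d)h_j$ levels of the Rokhlin tower; applying $T_{s_j}$ repeatedly (a bounded number of times, or $\sim\eps/s_j$ times) shifts this defect region while changing $S$ to $T_{\eps}S$ (approximately) and changing $k_j$ by a bounded amount. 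Because $\eps$ can be taken arbitrarily small and $s_jh_j\to\infty$, these shifts move the defect block essentially freely inside the tower.

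From here I would derive a contradiction from $d<1$ by a counting/invariance argument: if a fixed proportion $1-d>0$ of the mass of $\Delta^{k_js_j}$ escapes into the defect block near the top (say) of the tower, then, translating by suitable powers of $T_{s_j}$ as above, I can produce many disjoint translates of that block all carrying proportion $1-d$ of a \emph{single} limiting self-joining $\nu'$, which is impossible once $m(1-d)>1$ for $m$ translates —  unless $\nu'$ is itself flow-invariant in a way that forces $\nu'=\mu\times\mu$, and then the ergodicity of $\Delta_S$ applied to $d\Delta_S+(1-d)(\mu\times\mu)$ forces $d=1$. Concretely, I would show that $\nu'$ must be $T_{s_p}\times T_{s_p}$-invariant for every $p$ (the defect block, like $D_j^\delta$, is asymptotically invariant under each fixed $T_{s_p}$), invoke Lemma~\ref{lemma:lambda=nu} to conclude $\nu'\ll\mu\times\mu$ is impossible unless it equals the full product — no: rather, $\nu'$ is a self-joining and $T_{s_p}\times T_{s_p}$-invariant for all $p$ hence $(T_t\times T_t)$-invariant for all $t$, so it is a genuine self-joining; then the decomposition $\Delta^{k_js_j}\to d\Delta_S+(1-d)\nu'$ combined with running the \emph{same} argument starting from $\nu'$ (which again yields mass $\ge d'\ge 1/2$ recaptured onto a graph) shows the defect cannot persist, forcing $d=1$.

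The main obstacle I anticipate is precisely making the ``shift the defect block freely'' step rigorous: one must track how $Y_j^{d,\pm}$ and the index $k_j$ transform under composition with $T_{\eps}$, control the $O(1/h_j)$ boundary errors uniformly, and ensure the translated blocks are genuinely disjoint in $X\times X$ (not merely in one coordinate) so that the mass inequality $m(1-d)\le 1$ can be invoked. A cleaner route, which I would try first, is to avoid the multiplicity argument entirely: show directly that the weak limit $\rho:=\lim_j\Delta^{k_js_j}$ satisfies $\rho=(T_t\times T_t)_*\rho$ composed appropriately is again of graph type — using that $\rho$ restricted to $Y_j\times X$ limits to $d\Delta_S$ while $\rho$ is $(T_{s_p}\times T_{s_p})$-invariant — and conclude $\rho=\Delta_S$ by ergodicity of $\Delta_S$, which gives $d=1$ a posteriori.
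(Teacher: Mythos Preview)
Your proposal has a genuine gap: none of the sketched routes actually closes the argument, and the key mechanism used in the paper is missing.

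The approaches you outline all fail for the same reason. Once you know $\Delta^{k_js_j}\to d\,\Delta_S+(1-d)\nu'$, the fact that $\nu'$ is $(T_{s_p}\times T_{s_p})$-invariant for all $p$ tells you nothing new: $\nu'$ is already a self-joining, hence $(T_t\times T_t)$-invariant for every $t$. Ergodicity of $\Delta_S$ says $\Delta_S$ is extremal among self-joinings, but a convex combination $d\,\Delta_S+(1-d)\nu'$ with $\nu'\neq\Delta_S$ is perfectly legal, so your ``cleaner route'' cannot conclude $\rho=\Delta_S$. The translation idea (``shift the defect block by many $T_{s_j}$'') also does not produce disjoint copies of the defect carrying mass $1-d$ each in $X\times X$: the defect $X\times X\setminus C_j^{k_j}$ is \emph{almost invariant} under $T_{s_j}\times T_{s_j}$, so small shifts do nothing and large shifts change the index $k_j$, destroying the link to $\nu'$. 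Finally, rerunning the lemma on $\nu'$ gives a new sequence $(k'_j)$ and a new constant $d'\ge 1/2$ for a possibly different ergodic joining; there is no evident contradiction.

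What the paper does instead hinges on two ideas you do not have. First, take $d$ to be the \emph{maximum} value for which the conclusion of Lemma~\ref{lemma:WCLemma} holds (the set of such $d$ is closed). Second, use $S$ itself---not small $T_t$---to enlarge the good set: since $S$ commutes with the flow and preserves $\mu$, one checks directly that $\mu(A\cap T_{s_j}^{k_j}B\cap SY_j)\to d\,\mu(A\cap SB)$ as well, and then (via Lemma~\ref{lemma:lambda=nu}) the same holds with $Y_j\cup SY_j$ in place of $Y_j$, limit constant $u:=\lim_j\mu(Y_j\cup SY_j)$. If $u<1$, then $\Delta_S(W_j\times W_j)\to 1-u>0$ where $W_j$ is the complement of $Y_j$ in the tower. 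But $W_j$ occupies only the top (or bottom) $(1-d)h_j$ levels, so for $\delta'$ slightly below $1-d$ one has $W_j(\delta')\times W_j(\delta')\subset D_j^{\delta'}$ with positive $\Delta_S$-mass. Now Lemma~\ref{Lemma:approximation} applied to $\Delta_S$ with this \emph{small} $\delta'$ yields a new sequence $(k'_j)$ with $|k'_j|\le\delta' h_j$, hence $\mu(Y_j^{k'_j})\ge 1-\delta'>d$, contradicting the maximality of $d$. So $u=1$ and the theorem follows. The crucial step you are missing is this feedback to Lemma~\ref{Lemma:approximation} via a \emph{thin} fat-diagonal, which is what the maximality of $d$ is set up to contradict.
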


\begin{proof}
We fix $T$ and consider the set of real numbers $d$ for which the conclusion in the statement of Lemma~\ref{lemma:WCLemma} holds. It is easy to show by a diagonal argument that this set is closed. Hence we consider its maximal element, which we still denote by $d$.
(If $d=1$, the theorem is proved.)

So we start from the following statement: We have a sequence of sets $\{Y_j\}$, of the form given in Lemma~\ref{lemma:WCLemma}, such that for all measurable $A,B$
$$ \mu(A\cap T_{s_j}^{k_j}B \cap Y_j) \to d \mu(A\cap SB).$$
Then a similar statement holds when $Y_j$ is replaced by $SY_j$: Indeed, since $S$ commutes with $T$ and $\mu$ is invariant by $S$, we have
\begin{multline*} 
 \mu(A\cap T^{k_j}_{s_j}B \cap SY_j) =
 \mu(S^{-1}A\cap T^{k_j}_{s_j}S^{-1}B \cap Y_j) \\
\tend{j}{\infty} d \, \mu( S^{-1}A\cap SS^{-1}B) = d \, \mu(A\cap SB).
\end{multline*}
Let $\lambda$ be a limit point for the sequence of probability measures $\{\nu_j\}$ defined on $X\times X$ by
$$
\nu_j(A\times B)\egdef \dfrac{1}{\mu(Y_j\cup SY_j)}\mu\left(A\cap T^{k_j}_{s_j}B \cap (Y_j\cup SY_j)\right).
$$
Then $\lambda\le 2\, \Delta_S$.
Moreover, the measure $\lambda$ is invariant by $T_{s_p}\times T_{s_p}$ for all $p$. Indeed, for $j\ge p$, we have
$$ \mu (T_{s_p}Y_j\vartriangle Y_j) = \mu (T_{s_j}^{s_p/s_j}Y_j\vartriangle Y_j)$$
which is of order $\frac{s_p}{s_j h_j}$, hence vanishes as $j\to\infty$. Since $\Delta_S$ is an ergodic measure for the flow $\{T_t\times T_t\}$, we can apply Lemma~\ref{lemma:lambda=nu}, which gives $\lambda=\Delta_S$. 
We obtain
$$ \mu\left(A\cap T^{k_j}_{s_j}B \cap (Y_j\cup SY_j)\right) \to
 u\, \mu(A\cap SB),$$
where $u \egdef \lim_j \mu(Y_j\cup SY_j)$
(if the limit does not exist, then we consider some subsequence of $\{j\}$).

Our aim is to show that $u=1$, which will end the proof of the theorem. Let us introduce
$$ W_j\egdef \left(\u_{0\le i\le h_j} T_{s_j}^i E_j\right) \setminus Y_j. $$
Assume that $u<1$, then (denoting by $Y^c$ the complementary of $Y\subset X$)
$$ \lim_j \Delta_S(W_j\times W_j) = \lim_j \mu(W_j\cap SW_j) = \lim_j \mu(Y_j^c\cap SY_j^c) = 1-u >0. $$
Let us consider the case where $Y_j$ has the form 
$Y_j^{d,-} =\u_{0\le i< d h_j}T_{s_j}^i E_j$. Then $W_j=\u_{dh_j\le i \le h_j}T_{s_j}^i E_j$, and we define for any $\delta'<1-d$
$$ W_j(\delta')\egdef \u_{(1-\delta')h_j< i \le h_j}T_{s_j}^i E_j\subset W_j. $$
In the same way, if $Y_j$ has the form $Y_j^{d,+} =\u_{(1-d) h_j < i \le h_j}T_{s_j}^i E_j$, we set for $\delta'<1-d$
$$ W_j(\delta')\egdef \u_{0< i <\delta' h_j}T_{s_j}^i E_j\subset W_j. $$
In both cases, note that
$$ \Delta_S\Bigl( (W_j\times W_j) \setminus (W_j(\delta')\times W_j(\delta')) \Bigr) \le 2(1-d-\delta'). $$
Thus, for $\delta'$ close enough to $1-d$, we get 
$$ \limsup_j\Delta_S\Bigl(W_j(\delta')\times W_j(\delta')\Bigr)\ge 1-u -2(1-d-\delta') > 0. $$ 
Since $W_j(\delta')\times W_j(\delta')\subset D_j^{\delta'}$, this ensures that 
$$ \limsup \Delta_S(D_j^{\delta'}) > 0. $$
Lemma~\ref{Lemma:approximation} then provides a sequence $(k'_j)$ with $-\delta' h_j\le k'_j \le \delta' h_j$, such that 
$$ \Delta^{k'_j s_j}(\,\cdot\,|C_j^{k'_j}) \tend[w]{j}{\infty} \Delta_S, $$
and the projections $Y_j^{k'_j}$ of $C_j^{k'_j}$ on the first coordinate satisfy 
$$ \lim_j \mu (Y_j^{k'_j}) \ge 1-\delta'>d, $$
which contradicts the maximality of $d$. Hence $u=1$.
\end{proof}

\section{Rigidity of factors of rank-one flows}

\begin{lemma}
 \label{lemma:rigidity}
Let $\F$ be a non-trivial factor of a rank-one flow $\T$. 
Then there exist $1/2\le d \le 1$, a sequence of integers $(k_j)$ with $|k_js_j|\nrightarrow 0$ and a sequence of measurable sets $(Y_j)$ such that, for all measurable sets $A,B\in\F$
$$ \mu(A\cap T^{k_j}_{s_j}B \cap Y_j) \to d\, \mu(A\cap B),$$
where $Y_j$ has the form
$$Y_j^{d,-} \egdef\u_{0\le i< d h_j}T_{s_j}^i E_j\quad\text{or}\quad
Y_j^{d,+} \egdef\u_{(1-d) h_j < i \le h_j}T_{s_j}^i E_j.$$ 
\end{lemma}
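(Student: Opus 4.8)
The plan is to mimic the proof of the Weak Closure Lemma (Lemma~\ref{lemma:WCLemma}), but working with the relatively independent self-joining $\nu\egdef\mu\otimes_{\F}\mu$ in place of $\Delta_S$. First I would observe that $\nu$ is an ergodic self-joining of $\T$: indeed, if $\F$ is a non-trivial factor of the flow, then $\mu\otimes_\F\mu$ is ergodic under $T\times T$ precisely because $\F$ is a factor of an ergodic system (a standard fact about relatively independent joinings over a factor of an ergodic transformation, which extends verbatim to flows). I also need that $\nu\neq\Delta$, which holds exactly because $\F$ is non-trivial: there is a set $A\in\F$ with $0<\mu(A)<1$, and then $\nu(A\times A^c)=0$ while $\Delta(A\times A^c)=0$ too, so instead I pick $A\notin\F$ and use $\nu(A\times A)=\int \EE[\ind A|\F]^2\,d\mu<\int\EE[\ind A|\F]\,d\mu=\mu(A)=\Delta(A\times A)$, strict by Jensen since $\ind A$ is not $\F$-measurable.

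Now I apply Theorem~\ref{thm:1.2} (or rather its proof, exactly as in Lemma~\ref{lemma:WCLemma}) to this $\nu$: for any sequence $(\delta_j)\searrow\tfrac12$ the proof yields integers $k_j$ with $-\delta_j h_j\le k_j\le\delta_j h_j$ such that $\Delta^{k_js_j}(\,\cdot\,|C_j^{k_j})\tend[w]{j}{\infty}\nu$ and $\Delta^{k_js_j}(C_j^{k_j})\to d$ for some $d\ge 1/2$. Setting $Y_j^{k_j}$ to be the projection of $C_j^{k_j}$ onto the first coordinate exactly as in Lemma~\ref{lemma:WCLemma}, I get $\mu(Y_j^{k_j})\to d$ and, for all measurable $A,B$,
$$ \mu(A\cap T^{k_j}_{s_j}B\cap Y_j^{k_j})\to d\,\nu(A\times B).$$
Restricting to $A,B\in\F$, and recalling that $\nu$ agrees with $\Delta$ on $\F\otimes\F$, the right-hand side becomes $d\,\mu(A\cap B)$. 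Replacing $Y_j^{k_j}$ by $Y_j^{d,-}$ or $Y_j^{d,+}$ along the appropriate subsequence (depending on the sign of $k_j$), exactly as in the final paragraph of the proof of Lemma~\ref{lemma:WCLemma}, gives the sets $Y_j$ of the stated form.

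The one genuinely new point compared with Lemma~\ref{lemma:WCLemma} is the requirement $|k_js_j|\nrightarrow 0$, i.e.\ that the approximating times do not all degenerate to the identity. Here is where I expect the main work: I would argue by contradiction. If $k_js_j\to 0$ along a subsequence, then $\Delta^{k_js_j}\to\Delta$ weakly (by the measurability of the flow), so in particular $\Delta^{k_js_j}(C_j^{k_j})$-weighted limit forces $\nu=\lim\Delta^{k_js_j}(\,\cdot\,|C_j^{k_j})$ to be absorbed into the diagonal: more precisely, since $\Delta^{k_js_j}\to\Delta$ and $\Delta^{k_js_j} = \Delta^{k_js_j}(\,\cdot\,|C_j^{k_j})\,\Delta^{k_js_j}(C_j^{k_j}) + (\text{rest})$, a limit point gives $\Delta = d\,\nu + (1-d)\nu''$ for some self-joining $\nu''$; but $\Delta$ is an extreme point of the simplex of self-joinings (it is ergodic), hence $\nu=\Delta$, contradicting $\nu\neq\Delta$ established above. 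Therefore $|k_js_j|\nrightarrow 0$, and the lemma follows.
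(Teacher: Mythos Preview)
Your overall strategy is exactly the paper's, but there is a genuine gap: you assert that $\nu=\mu\otimes_\F\mu$ is an \emph{ergodic} self-joining of $\T$, calling this ``a standard fact about relatively independent joinings over a factor of an ergodic transformation.'' This is false in general. Ergodicity of $\mu\otimes_\F\mu$ is equivalent to $\T$ being relatively weakly mixing over $\F$; for instance if $\F$ is the trivial $\sigma$-algebra then $\mu\otimes_\F\mu=\mu\times\mu$, which is ergodic only when $\T$ is weakly mixing. Since Lemma~\ref{Lemma:approximation} (and hence the proof of Theorem~\ref{thm:1.2} that you invoke) explicitly requires $\nu$ to be ergodic, your argument does not go through as written.

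The paper repairs this by passing to an \emph{ergodic component} $\nu$ of $\mu\otimes_\F\mu$ satisfying $\nu(\{(x,x):x\in X\})=0$. Such a component exists because $\mu\otimes_\F\mu\neq\Delta$ (your Jensen argument, or directly from non-triviality of $\F$), and two distinct ergodic self-joinings are mutually singular. One then has to check that this ergodic component still coincides with $\Delta$ on $\F\otimes\F$; this holds because $\mu\otimes_\F\mu$ restricted to $\F\otimes\F$ equals $\Delta|_{\F\otimes\F}$, which is ergodic (the factor action is ergodic), so every ergodic component of $\mu\otimes_\F\mu$ must restrict to the same measure on $\F\otimes\F$. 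With this $\nu$ in hand, the rest of your argument goes through. Your contradiction for $|k_js_j|\nrightarrow 0$ via extremality of $\Delta$ is valid, though the paper uses the more direct observation that $k_js_j\to 0$ forces $\mu(A\cap T_{s_j}^{k_j}B\cap Y_j)\to d\,\mu(A\cap B)$ for \emph{all} measurable $A,B$, whence $\nu=\Delta$ on all of $\A\otimes\A$, contradicting $\nu(\text{diagonal})=0$.
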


\begin{proof}
We start with the relatively independent joining above the factor $\F$ (see Section~\ref{Sec:def}). Since $\F$ is a non-trivial factor, $\mu\otimes_\F\mu\neq\Delta$, hence we can consider an ergodic component $\nu$ such that $\nu(\{(x,x), x\in X\})=0$. 
Observe however that for any sets $A, B\in\F$, we have $\nu(A\times B)=\mu(A\cap B)$.

We repeat the proof of Lemma~\ref{lemma:WCLemma} with $\nu$ in place of $\Delta_S$. 
This provides sequences $(k_j)$ and $(Y_j)$ and a real number $1/2\le d \le 1$, such that for all measurable sets $A, B$
$$
\mu(A\cap T^{k_j}_{s_j}B \cap Y_j) \to d\, \nu(A\times B).
$$
If we had $k_js_j\to 0$, then the left-hand side would converge to $d\,\mu(A\cap B )$, which would give $\nu(A\times B)=\mu(A\cap B)$ for all $A,B\in\A$, and this would contradict the hypothesis that $\nu$ gives measure 0 to the diagonal. 
\end{proof}

\begin{theo}
\label{thm:factors}
 Let $\F$ be a non-trivial factor of a rank-one flow $\T$. 
Then there exists a sequence of integers $(k_j)$ with $|k_js_j|\to \infty$ such that, for all measurable sets $A,B\in\F$
$$ \mu(A\cap T^{k_j}_{s_j}B ) \to \mu(A\cap B).$$
\end{theo}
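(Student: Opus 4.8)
\emph{Strategy.} The plan is to push the constant $d$ in Lemma~\ref{lemma:rigidity} up to $1$, mimicking the proof of Theorem~\ref{thm:WC} with the ergodic joining $\nu$ (the one with $\nu(\{(x,x):x\in X\})=0$, which moreover coincides with $\Delta$ on $\F\otimes\F$) in the role of $\Delta_S$, and then to upgrade ``$|k_js_j|\nrightarrow0$'' to ``$|k_js_j|\to\infty$''. Recall that the proof of Lemma~\ref{lemma:rigidity} in fact gives, for \emph{every} measurable $A,B$, that $\mu(A\cap T^{k_j}_{s_j}B\cap Y_j)\to d\,\nu(A\times B)$ with $Y_j$ of one of the two prescribed forms. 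Let $\mathcal D\subset[1/2,1]$ be the set of all $d$ for which such sequences $(k_j),(Y_j)$ exist. By Lemma~\ref{lemma:rigidity}, $\mathcal D\neq\emptyset$, and a diagonal argument shows that $\mathcal D$ is closed; set $d_0:=\max\mathcal D$ and suppose, for contradiction, that $d_0<1$.

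\emph{Raising $d$ to $1$.} Following the proof of Theorem~\ref{thm:WC}, I would attach to $Y_j$ a second family $Z_j$ with $\mu(A\cap T^{k_j}_{s_j}B\cap Z_j)\to d_0\,\nu(A\times B)$ --- \emph{the same limit} --- chosen so that $Y_j\cup Z_j$ asymptotically covers more of the $j$-th Rokhlin tower. Put $u:=\lim_j\mu(Y_j\cup Z_j)\in[d_0,1]$. A weak limit $\lambda$ of the probability measures $(A,B)\mapsto\mu\bigl(A\cap T^{k_j}_{s_j}B\cap(Y_j\cup Z_j)\bigr)/\mu(Y_j\cup Z_j)$ then satisfies $\lambda\leq(2d_0/u)\,\nu\leq2\nu$, and is invariant under $T_{s_p}\times T_{s_p}$ for every $p$ because $\mu\bigl(T_{s_p}(Y_j\cup Z_j)\vartriangle(Y_j\cup Z_j)\bigr)\to0$. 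As $\nu$ is ergodic for the flow $(T_t\times T_t)$ and $\bigcup_p s_p\ZZ$ is a dense subgroup of $\RR$, Lemma~\ref{lemma:lambda=nu} applied to $(T_t\times T_t)$ forces $\lambda=\nu$, whence $\mu\bigl(A\cap T^{k_j}_{s_j}B\cap(Y_j\cup Z_j)\bigr)\to u\,\nu(A\times B)$. If $u=1$, then $\mu\bigl((Y_j\cup Z_j)^c\bigr)\to0$ gives $\mu(A\cap T^{k_j}_{s_j}B)\to\nu(A\times B)$ for all $A,B$. If $u<1$, one argues, from this same relation together with the bounded density of the marginals of $\nu$, that $\limsup_j\nu(D^{\delta'}_j)>0$ for some $\delta'<1-d_0$ (one isolates inside the uncovered part of the tower a sub-block $W_j(\delta')$ of relative height $\delta'$ with $W_j(\delta')\times W_j(\delta')\subset D^{\delta'}_j$ of positive asymptotic $\nu$-mass). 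Lemma~\ref{Lemma:approximation} then yields a sequence $(k'_j)$ with $|k'_j|\leq\delta' h_j$ and $\Delta^{k'_j s_j}(\,\cdot\,|C^{k'_j}_j)\to\nu$, whose first projections have asymptotic $\mu$-measure $\geq1-\delta'>d_0$ --- an element of $\mathcal D$ exceeding $d_0$, a contradiction. Hence $d_0=1$: there is $(k_j)$ with $\mu(A\cap T^{k_j}_{s_j}B)\to\nu(A\times B)$ for all $A,B$, in particular $\mu(A\cap T^{k_j}_{s_j}B)\to\mu(A\cap B)$ for $A,B\in\F$; and $|k_js_j|\nrightarrow0$ necessarily, since $k_js_j\to0$ would give $T^{k_j}_{s_j}\to\Id$, hence $\nu=\Delta$, contrary to $\nu(\{(x,x)\})=0$.

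\emph{From $\nrightarrow0$ to $\to\infty$.} Extract a subsequence with $|k_js_j|\geq c>0$. Either $|k_js_j|\to\infty$ along a further subsequence, and we are done, or $k_js_j\to t\neq0$. In the latter case observe that $\mu(A\cap T^{k_j}_{s_j}B)\to\mu(A\cap B)$ on $\F$ says exactly that the unitaries $T^{k_j}_{s_j}$ of $L^2(\F)$ tend weakly to $\Id$, hence strongly; consequently, for each fixed $m\geq1$, $T^{m k_j}_{s_j}=(T^{k_j}_{s_j})^m\to\Id$ strongly on $L^2(\F)$ as well (using $\|U^mv-v\|\leq m\|Uv-v\|$ for unitary $U$), while $mk_js_j\to mt\neq0$. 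A diagonal extraction over $m\to\infty$, against a countable dense subset of $L^2(\F)$, then produces integers $m\,k_{j(m)}$ (along a subsequence $(s_{j(m)})$ of $(s_j)$) such that $\mu\bigl(A\cap T^{m k_{j(m)}}_{s_{j(m)}}B\bigr)\to\mu(A\cap B)$ for all $A,B\in\F$ while $m\,k_{j(m)}\,s_{j(m)}\to+\infty$; relabelling gives the statement of the theorem.

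\emph{Main obstacle.} The real difficulty is the construction of the auxiliary family $Z_j$. In Theorem~\ref{thm:WC} one takes $Z_j=SY_j$, and the identity $\mu(A\cap T^{k_j}_{s_j}B\cap SY_j)=\mu(S^{-1}A\cap T^{k_j}_{s_j}S^{-1}B\cap Y_j)$ keeps the limit equal to $d_0\nu(A\times B)$ precisely because the fixed automorphism $S$ leaves $S^{-1}A,S^{-1}B$ independent of $j$. A factor carries no such automorphism, and the obvious candidate $Z_j=T^{m_j}_{s_j}Y_j$ with $m_j\asymp(1-d_0)h_j$ (the shift that would make $Y_j\cup Z_j$ exhaust the tower) translates $A,B$ by $m_js_j\to\infty$, so the needed convergence to $d_0\nu$ no longer follows from Lemma~\ref{lemma:rigidity}. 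Getting this step to work --- whether by proving a form of the approximation estimate of Lemma~\ref{Lemma:approximation} that is uniform along such moving translates, or by exploiting the flip symmetry of $\mu\otimes_\F\mu$ so as to bring the reflected columns $C^{-k_j}_j$ and the joining $\widetilde\nu$ into play --- is where the content specific to the factor case lies; once it is in place, the rest of the scheme above runs just as in the proof of Theorem~\ref{thm:WC}.
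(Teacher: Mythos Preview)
Your proposal correctly isolates the architecture of the argument and pinpoints the crux --- the missing analogue of $Z_j=SY_j$ --- but it stops short of a proof: the ``Main obstacle'' paragraph is an honest admission that you have not constructed $Z_j$, and neither of the two hints you offer (a uniform approximation along moving translates, or the flip symmetry of $\mu\otimes_\F\mu$) is what actually works.

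The paper's device is different and worth learning. Instead of seeking an automorphism to transport $Y_j$, one uses the joining $\nu$ itself: set
\[
\phi_j(x):=\EE_\nu\bigl[\ind{Y_j}(x')\,\big|\,x\bigr],\qquad U_j^\eps:=\{x:\phi_j(x)>\eps\}.
\]
Because $\nu$ coincides with $\Delta$ on $\F\otimes\F$, one has $\ind{A}(x)=\ind{A}(x')$ $\nu$-a.s.\ for $A\in\F$, and a two-line computation gives $\EE_\mu[\ind{A}\,\ind{T_{s_j}^{k_j}B}\,\phi_j]=\EE_\mu[\ind{A}\,\ind{T_{s_j}^{k_j}B}\,\ind{Y_j}]$ for all $A,B\in\F$. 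Thus the convergence from Lemma~\ref{lemma:rigidity} persists with $\ind{Y_j}$ replaced by $\phi_j$, and --- after thresholding --- by $\ind{U_j^\eps}$. The set $U_j^\eps$, the ``$\nu$-shadow'' of $Y_j$ on the first coordinate, is the correct substitute for $SY_j$; one then runs your scheme with $Z_j=U_j^{\eps_j}$ (letting $\eps_j\to0$ via a diagonal argument), and the endgame ($u<1\Rightarrow$ contradiction via Lemma~\ref{Lemma:approximation}) is exactly what you wrote.

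There is one genuine technical point absent from the WCT template: $U_j^\eps$ need not be almost $T_{s_j}$-invariant for an arbitrary threshold $\eps$, since the level set $\{\phi_j\approx\eps\}$ may carry mass. The paper handles this by first invoking Lemma~\ref{lemma:new s_j} to assume $s_j^2h_j\to\infty$, choosing $\delta_j=o(s_j)$ with $(\delta_jh_j)^{-1}=o(s_j)$, and then selecting by pigeonhole an $\eps_j\in[\eps/2,\eps]$ with $\mu(\{|\phi_j-\eps_j|<\delta_j\})\le 4\delta_j/\eps$; combined with $\EE_\mu|\phi_j-\phi_j\circ T_{s_j}|=O(1/h_j)$ this yields $\mu(U_j^{\eps_j}\vartriangle T_{s_j}U_j^{\eps_j})=o(s_j)$, which is what Lemma~\ref{lemma:lambda=nu} needs.

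Your final paragraph, upgrading $|k_js_j|\nrightarrow0$ to $|k_js_j|\to\infty$, is correct and in fact more explicit than the paper, which states the conclusion with $|k_js_j|\to\infty$ but does not spell this step out.
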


\begin{proof}
 Again we fix some ergodic component $\nu$ such that $\nu(\{(x,x), x\in X\})=0$. We consider the maximal number $d$ for which the statement of Lemma~\ref{lemma:rigidity} is true.
We thus have a sequence of sets $\{Y_j\}$, of the form given in Lemma~\ref{lemma:rigidity}, such that 
\begin{equation}
\label{eq:convergence}
\forall A,B\in\F , \quad\frac{1}{\mu(Y_j)} \EE_\mu\left[ \ind{A}\ind{T_{s_j}^{k_j}B}\ind{Y_j}\right] \to \mu(A\cap B).
\end{equation}
In the above equation, one can replace $\ind{Y_j}$ by $\phi_j(x)\egdef \EE_\nu[\ind{Y_j}(x')|x]$: Indeed, since $\nu$ coincides with $\Delta$ on $\F\otimes\F$, we have $\ind{A}(x')=\ind{A}(x)$ and $\ind{T_{s_j}^{k_j}B}(x')=\ind{T_{s_j}^{k_j}B}(x)$ $\nu$-a.s. Hence, 
$$
\EE_\mu\left[ \ind{A}\ind{T_{s_j}^{k_j}B}\ind{Y_j}\right]=\EE_\nu\left[ \ind{A}(x)\ind{T_{s_j}^{k_j}B}(x)\ind{Y_j}(x')\right]
= \EE_\mu\left[ \ind{A}(x)\ind{T_{s_j}^{k_j}B}(x) \phi_j(x)\right].
$$
We note that 
\begin{equation}
 \label{eq:phi}
\EE_\mu\left[ |\phi_j-\phi_j\circ T_{s_j}|\right] \le \mu(Y_j\vartriangle T_{s_j}Y_j) = O\left(\frac1{h_j}\right).
\end{equation}
For any $\eps >0$, let 
$$
U_j^{\eps}\egdef \left\{x : \phi_j(x)>\eps \right\}.
$$
We would like to prove that~\eqref{eq:convergence} remains valid with $\ind{Y_j}$ replaced by $\ind{U_j^{\eps}}$ for $\eps$ small enough. To this end, we need almost-invariance of $U_j^{\eps}$ under $T_{s_j}$, which does not seem to be guaranteed for arbitrary $\eps$. Therefore, we use the following technical argument to find a sequence $(\eps_j)$ for which the desired result holds.

Fix $\eps >0$ small enough so that $\mu(U_j^{\eps})>\mu(Y_j)/2$ for all large $j$. By Lemma~\ref{lemma:new s_j}, we can assume that $s_j^2h_j\to\infty$. 
Let $\delta_j=o(s_j)$ such that $(\delta_jh_j )^{-1}=o(s_j)$.
We divide the interval $[\eps/2, \eps]$ into $\eps/(4\delta_j)$ disjoint subintervals of length $2\delta_j$. 
One of these subintervals, called $I_j$, satisfy 
\begin{equation}
 \label{eq:I_j}
\mu \left(\{x: \phi_j(x)\in I_j\}\right) \le \frac{4\delta_j}{\eps}. 
\end{equation}
Let us call $\eps_j$ the center of the interval $I_j$. Observe that
$$
\mu\left( U_j^{\eps_j} \vartriangle T_{s_j}U_j^{\eps_j} \right) 
\le \mu\left(\{x: |\phi_j(x)-\eps_j|<\delta_j\}\right) + \mu\left(\{x: |\phi_j(x)-\phi_j(T_{s_j}(x))|\ge\delta_j\}\right).
$$
By~\eqref{eq:I_j} and~\eqref{eq:phi}, we get that 
\begin{equation}
\label{eq:invariance U_j}
 \mu\left( U_j^{\eps_j} \vartriangle T_{s_j}U_j^{\eps_j} \right) = O\left(\delta_j+\frac1{\delta_jh_j}\right) = o(s_j).
\end{equation}
Taking a subsequence if necessary, we can assume that the sequence of probability measures $\lambda_j$, defined by
$$
\forall A, B\in\A, \qquad \lambda_j(A\times B)\egdef \frac{1}{\mu(U_j^{\eps_j})} \EE_\mu\left[ \ind{A}\ind{T_{s_j}^{k_j}B}\ind{U_j^{\eps_j}}\right],
$$
converges to some probability measure $\lambda$, which is invariant by $T_{s_p}\times T_{s_p}$ for all $p$ by~\eqref{eq:invariance U_j}.
Recall that $\mu(U_j^{\eps_j})>\mu(Y_j)/2$ and that $\ind{U_j^{\eps_j}}\le\phi_j/\eps_j$.
Then, since $\eps_j>\eps/2$, we have $\lambda|_{\F\otimes\F}\le \frac{4}{\eps} \Delta|_{\F\otimes\F}$.
Since $\Delta|_{\F\otimes\F}$ is an ergodic measure for the flow $\{T_t\times T_t\}|_{\F\otimes\F}$, we can apply Lemma~\ref{lemma:lambda=nu}, which gives $\lambda|_{\F\otimes\F}=\Delta|_{\F\otimes\F}$. 
This means that~\eqref{eq:convergence} remains valid with $\ind{Y_j}$ replaced by $\ind{U_j^{\eps_j}}$. 

The analogue of~\eqref{eq:convergence} is also valid when we replace $\ind{Y_j}$ by $\ind{Y_j\cup U_j^{\eps_j}}$: Indeed, we also have the almost-invariance property
$$\mu\left( (Y_j\cup U_j^{\eps_j}) \vartriangle T_{s_j}(Y_j\cup U_j^{\eps_j}) \right) =o(s_j)$$
and $\ind{Y_j\cup U_j^{\eps_j}}\le \ind{Y_j}+\ind{U_j^{\eps_j}}$. We conclude by a similar argument.

Since $\eps$ can be taken arbitrarily small, we can now use a diagonal argument to show that~\eqref{eq:convergence} remains valid with $\ind{Y_j}$ replaced by $\ind{Y_j\cup U_j^{\eps_j}}$ where the sequence $(\eps_j)$ now satisfies $\eps_j\to0$.
Hence, taking a subsequence if necessary to ensure that $\mu(Y_j\cup U_j^{\eps_j})$ converges to some number $u$, we get 
$$
\forall A,B\in\F , \quad  \EE_\mu\left[ \ind{A}\ind{T_{s_j}^{k_j}B}\ind{Y_j\cup U_j^{\eps_j}}\right] \to u \mu(A\cap B).
$$
It now remains to prove that $u=1$, which we do by repeating the end of the proof of Theorem~\ref{thm:WC}. 
Assume that $u<1$. 
Let us introduce
$$ W_j\egdef \left(\u_{0\le i\le h_j} T_{s_j}^i E_j\right) \setminus Y_j. $$
We have
$$ 
\lim_j \nu(W_j\times W_j) = \lim_j \nu(Y_j^c\times Y_j^c) 
= \lim_j  \EE_\mu \left[ \ind{Y_j^c} (1-\phi_j)\right].
$$
Observe that $(1-\phi_j)\ge\ind{(U_j^{\eps_j})^c} -\eps_j$. Hence
$$
\lim_j \nu(W_j\times W_j) \ge \lim_j  \EE_\mu \left[ \ind{Y_j^c}\ind{(U_j^{\eps_j})^c} \right]
= 1-u >0. 
$$
Let us consider the case where $Y_j$ has the form 
$Y_j^{d,-} =\u_{0\le i< d h_j}T_{s_j}^i E_j$. Then $W_j=\u_{dh_j\le i \le h_j}T_{s_j}^i E_j$, and we define for any $\delta'<1-d$
$$ W_j(\delta')\egdef \u_{(1-\delta')h_j< i \le h_j}T_{s_j}^i E_j\subset W_j. $$
In the same way, if $Y_j$ has the form $Y_j^{d,+} =\u_{(1-d) h_j < i \le h_j}T_{s_j}^i E_j$, we set for $\delta'<1-d$
$$ W_j(\delta')\egdef \u_{0< i <\delta' h_j}T_{s_j}^i E_j\subset W_j. $$
In both cases, note that
$$ \nu\Bigl( (W_j\times W_j) \setminus (W_j(\delta')\times W_j(\delta')) \Bigr) \le 2(1-d-\delta'). $$
thus, for $\delta'$ close enough to $1-d$, we get 
$$ \limsup_j\nu\Bigl(W_j(\delta')\times W_j(\delta')\Bigr)\ge 1-u -2(1-d-\delta') > 0. $$ 
Since $W_j(\delta')\times W_j(\delta')\subset D_j^{\delta'}$, this ensures that 
$$ \limsup \nu(D_j^{\delta'}) > 0. $$
Lemma~\ref{Lemma:approximation} then provides a sequence $(k'_j)$ with $-\delta' h_j\le k'_j \le \delta' h_j$, such that 
$$ \Delta^{k'_j s_j}(\,\cdot\,|C_j^{k'_j}) \tend[w]{j}{\infty} \nu. $$
In particular, $\Delta^{k'_j s_j}(\,\cdot\,|C_j^{k'_j})|_{\F\otimes\F} \tend[w]{j}{\infty} \Delta|_{\F\otimes\F}$. 
Since the projections $Y_j^{k'_j}$ of $C_j^{k'_j}$ on the first coordinate satisfy 
$$ \lim_j \mu (Y_j^{k'_j}) \ge 1-\delta'>d, $$ this contradicts the maximality of $d$. Hence $u=1$.
\end{proof}

\section{King's theorem for flat-roof rank-one flow}

We consider a rank-one flow $\T$. We say that $\T$ has \emph{flat roof} if we can choose the sequence $\xi_j=\{E_j,T_{s_j}E_j,\ldots,T_{s_j}^{h_j-1}E_j, X\setminus\bigsqcup_{k=0}^{h_j-1}T_{s_j}^kE_j\}$ in the definition such that 
$$
\dfrac{\mu\left(T_{s_j}^{h_j}E_j\vartriangle E_j\right)}{\mu(E_j)}\tend{j}{\infty}0.
$$

\begin{theo}
 \label{thm:flat roof}
Let $\T$ be a flat-roof rank-one flow, and $\nu$ be an ergodic self-joining of $\T$. Then there exists a sequence $(k_j)$ such that $\Delta^{k_js_j}\tend[w]{j}{\infty} \nu$.
\end{theo}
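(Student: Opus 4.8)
The plan is to run the proof of Lemma~\ref{Lemma:approximation} and Theorem~\ref{thm:1.2}, but to use the flat‑roof hypothesis so that the column‑approximation of $\nu$ can be carried out with \emph{arbitrarily thin} fat diagonals. Concretely, it suffices to produce a sequence $(k_j)$ with $|k_j|/h_j\tend{j}{\infty}0$ and $\Delta^{k_js_j}(\,\cdot\,|C^{k_j}_j)\tend[w]{j}{\infty}\nu$: since $\xi_j$ converges to the partition into points we may assume $h_j\mu(E_j)\tend{j}{\infty}1$, hence $\Delta^{k_js_j}(C^{k_j}_j)=(h_j-|k_j|)\mu(E_j)\tend{j}{\infty}1$, and decomposing $\Delta^{k_js_j}$ into its restrictions to $C^{k_j}_j$ and to the complement exactly as in the proof of Theorem~\ref{thm:1.2} yields $\Delta^{k_js_j}\tend[w]{j}{\infty}\nu$. (As usual we are free to pass to a subsequence of the tower sequence.)

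The new ingredient is a \emph{circular} description of the towers. The flat‑roof bound $\mu(T_{s_j}^{h_j}E_j\vartriangle E_j)=o(\mu(E_j))$ together with $h_j\mu(E_j)\to1$ forces $T_{s_j}^{h_j}$ to map, in the limit, each level $T_{s_j}^iE_j$ onto itself, so that up to a seam of measure $o(1)$ the map $T_{s_j}$ acts on the (asymptotically full) tower like a cyclic permutation of its $h_j$ levels. Sorting the boxes $T_{s_j}^rE_j\times T_{s_j}^\ell E_j$ of $X\times X$ by the residue of $r-\ell$ modulo $h_j$ gives sets $R^m_j$, $m\in\ZZ/h_j\ZZ$, each a disjoint union of $h_j$ boxes (one for each $\ell$) that $T_{s_j}\times T_{s_j}$ permutes cyclically; in particular $R^m_j=C^{-m}_j\u C^{h_j-m}_j$ for $0<m<h_j$. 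Using that $\nu$ is $(T_{s_p}\times T_{s_p})$‑invariant for every $p$ and summing the seam errors around each cycle — here the flat‑roof estimate enters crucially, a single box‑transition contributing an error $o(\mu(E_j))$ and only $O(1)$ transitions of the cycle crossing the seam, so that $h_j$ times the error is still $o(1)$ — one gets that $\nu$ restricted to $R^m_j$ is equidistributed over its $h_j$ boxes up to an $o(1)$ error, hence
$$\nu(C^{-m}_j)=\frac{h_j-m}{h_j}\,\nu(R^m_j)+o(1)\qquad\text{and}\qquad \nu(C^{h_j-m}_j)=\frac{m}{h_j}\,\nu(R^m_j)+o(1).$$
The same seam estimate shows that for an arbitrary sequence $(m_j)$ the shifted fat diagonal $D^{m_j,\delta}_j\egdef\u_{|k-m_j|\le[\delta h_j]}C^k_j$ is asymptotically $(T_{s_p}\times T_{s_p})$‑invariant, so Lemma~\ref{Lemma:approximation} remains valid with $D^\delta_j$ replaced by $D^{m_j,\delta}_j$ as soon as $\liminf_j\nu(D^{m_j,\delta}_j)>0$.

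It remains to locate the mass of $\nu$ among the circular columns. Consider the unimodular functions $f_j(x,y)\egdef\exp\bigl(2\pi i(r(x)-\ell(y))/h_j\bigr)$, defined $\nu$‑a.e.; a seam crossing changes $r-\ell$ by $\pm h_j$, so $f_j\circ(T_{s_p}\times T_{s_p})=f_j$ on the tower for all $j\ge p$. Hence every weak $L^2(\nu)$‑limit of $(f_j)$, and of each $(f_j^k)$, $k\in\ZZ$, is $(T_{s_p}\times T_{s_p})$‑invariant for all $p$, and therefore $\nu$‑a.e. constant by Lemma~\ref{lemma:lambda=nu} applied to the ergodic flow $(T_t\times T_t)$. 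Thus (along a subsequence) the distributions of $\theta_j\egdef(r(x)-\ell(y))/h_j$ under $\nu$ converge weakly to a probability measure $\mu_\theta$ on $\RR/\ZZ$. Picking $(m_j)$ with $m_j/h_j\to c$ for some $c\in\operatorname{supp}\mu_\theta$ (taking $c=0$ when $0\in\operatorname{supp}\mu_\theta$, which happens e.g. for $\nu=\mu\times\mu$ where $\mu_\theta$ is Lebesgue, and for off‑diagonals), the displayed formulas give $\liminf_j\nu(D^{-m_j,\delta}_j)\ge (1-c)\,\mu_\theta([c-\delta,c+\delta])>0$ for every $\delta>0$; the shifted Lemma~\ref{Lemma:approximation} then yields $(k_j)$ with $|k_j+m_j|\le[\delta h_j]$ and $\Delta^{k_js_j}(\,\cdot\,|C^{k_j}_j)\tend[w]{j}{\infty}\nu$. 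If $c=0$ we conclude by the first paragraph.

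The genuinely hard case is when $\mu_\theta$ avoids a neighbourhood of $0$ — when $\nu$ "lives at a non‑zero phase", the extreme instance being $\mu_\theta=\delta_{1/2}$. Then $|k_j|/h_j\to c\ne 0$, no column of index $o(h_j)$ can capture $\nu$, and one must instead show that the capture of $\nu$ by the column $C^{k_j}_j$ of index $\approx-m_j$ forces, through $T_{s_j}^{h_j}\approx\operatorname{Id}$, the simultaneous capture of $\nu$ by the wrap‑around column $C^{k_j+h_j}_j$ of index $\approx h_j-m_j$, so that the two halves in the decomposition of $\Delta^{k_js_j}$ both converge to $\nu$ and hence $\Delta^{k_js_j}\tend[w]{j}{\infty}\nu$ anyway; equivalently, one must rule out that the conditional structure of $\nu$ along its concentration column is anything other than what an off‑diagonal would leave after the tower average. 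This is the main obstacle. I expect to settle it by a maximality/bootstrapping argument on the phase, in the style of the proofs of Theorems~\ref{thm:WC} and~\ref{thm:factors}: take the phase closest to $0$ achievable by a capture of $\nu$ and, if it is non‑zero, use the circular structure together with $T_{s_j}^{h_j}\approx\operatorname{Id}$ to manufacture a capture with strictly smaller phase, reusing the seam estimates above and following King's argument in~\cite{King2001}.
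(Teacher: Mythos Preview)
Your proposal is incomplete: the last paragraph explicitly leaves the case $c\neq 0$ open, offering only the expectation that a maximality/bootstrapping argument will close it. That is not a proof, and the maximality schemes of Theorems~\ref{thm:WC} and~\ref{thm:factors} do not port over in any obvious way to your phase parameter. More to the point, the whole phase analysis is a detour that the paper's argument avoids.

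The missed idea is this: instead of trying to force $|k_j|/h_j\to 0$ so that $\Delta^{k_js_j}(C_j^{k_j})\to 1$, observe that the flat roof makes the \emph{unconditioned} $\Delta^{ks_j}$ already asymptotically supported on $G_j^k\egdef C_j^k\sqcup C_j^{k-h_j}$ (your $R_j^m$), for every $k\in\{0,\dots,h_j-1\}$. Concretely, for $\xi_j$-measurable $A,B$ one has $\Delta^{ks_j}(A\times B)=(r_k+\ell_k)/h_j+o(1)$ uniformly in $k$, where $r_k,\ell_k$ count the cells of $A\times B$ in $C_j^k$ and $C_j^{k-h_j}$ respectively: the wrap-around column $C_j^{k-h_j}$ receives its fair share of $\Delta^{ks_j}$-mass because $T_{s_j}^{h_j}\approx\Id$. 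On the other hand $\nu(A\times B\cap G_j^k)=r_ka_k^j+\ell_kb_k^j$ with $a_k^j=\nu(T_{s_j}^kE_j\times E_j)$ and $b_k^j=\nu(E_j\times T_{s_j}^{h_j-k}E_j)$, and $\nu(G_j^k)=(h_j-k)a_k^j+kb_k^j$. The flat-roof estimate $h_j\sum_k|a_k^j-b_k^j|\to 0$ then gives $\sum_k\nu(G_j^k)\bigl|\nu(A\times B|G_j^k)-\Delta^{ks_j}(A\times B)\bigr|\to 0$, so that $\sum_k\nu(G_j^k)\,\Delta^{ks_j}\tend[w]{j}{\infty}\nu$, and the Choice Lemma finishes. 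No localisation of $\nu$'s mass, no phases, no bootstrapping: the comparison $\nu(\,\cdot\,|G_j^k)\approx\Delta^{ks_j}$ holds uniformly over all $k$, precisely because the flat roof lets $\Delta^{ks_j}$ wrap around just as $\nu$ does.
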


\begin{proof}
 Let us defined, for $0\le k\le h_j-1$
$$ a_k^j \egdef \nu\left(T_{s_j}^{k}E_j\times E_j\right)\quad\text{and}\quad b_k^j\egdef\nu\left(E_j\times T_{s_j}^{h_j-k}E_j\right).
$$
We claim that the flat-roof property implies
\begin{equation}
 \label{eq:flat roof property}
h_j\sum_{k=1}^{h_j-1}|a_k^j-b_k^j|\tend{j}{\infty} 0.
\end{equation}
Indeed, by invariance $a_k^j =\nu\left(T_{s_j}^{h_j}E_j\times T_{s_j}^{h_j-k}E_j\right)$. Hence 
$$ |a_k^j-b_k^j| \le \nu\left((T_{s_j}^{h_j}E_j\vartriangle E_j)\times T_{s_j}^{h_j-k}E_j\right), $$
and 
$$ \sum_{k=1}^{h_j-1}|a_k^j-b_k^j| \le \nu\left((T_{s_j}^{h_j}E_j\vartriangle E_j)\times X\right) = \mu\left((T_{s_j}^{h_j}E_j\vartriangle E_j)\right). $$
The claim follows, since $\mu(E_j)\sim 1/h_j$.

\begin{figure}[htp]
 \centering
 \input{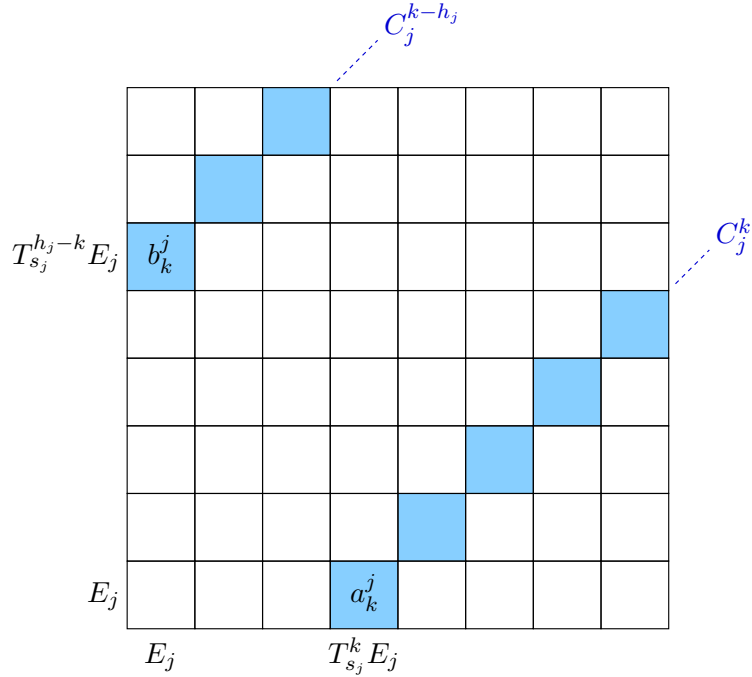}
 \caption{The union of $C_j^k$ and $C_j^{k-h_j}$ is denoted by $G_j^k$.}
 \label{fig:flat_roof}
\end{figure}

We gather the columns $C_j^k$ in pairs, defining for $1\le k\le h_j-1$, $G_j^k\egdef C_j^k\sqcup C_j^{k-h_j}$. (See Figure~\ref{fig:flat_roof}.) We also set $G_j^0\egdef C_j^0$.  Note that 
$\nu(G_j^k) = (h_j-k) a_k^j + k b_k^j$. 
Observe also that 
$$\nu\left( \bigsqcup_{k=0}^{h_j-1} G_j^k \right) =  \nu\left(\bigsqcup_{k=0}^{h_j-1}T_{s_j}^kE_j\times \bigsqcup_{k=0}^{h_j-1}T_{s_j}^kE_j\right)\tend{j}{\infty} 1.$$
Hence,
\begin{equation}
 \label{eq:decomposition in Gjk} 
\sum_{k=0}^{h_j-1} \nu(G_j^k)\,\nu(\,\cdot\,|G_j^k)  \tend[w]{j}{\infty} \nu.
\end{equation}
We claim that, using the flat-roof property, we can in the above equation replace $\nu(\,\cdot\,|G_j^k)$ by $\Delta^{ks_j}$.
Let $A$ and $B$ be $\xi_j$-measurable sets, which are unions of $T_{s_j}^iE_j$ $(0\le i\le h_j-1$). We denote by $r_k$ (respectively $\ell_k$) the number of elementary cells of the form $T_{s_j}^{i_1} E_j\times T_{s_j}^{i_2}E_j$ which are contained in $A\times B$ and which belong to the column $C_j^k$ (respectively $C_j^{k-h_j}$). We have
\begin{equation}
\label{eq:first}
\nu(A\times B|G_j^k) \nu(G_j^k) = \ell_k b_k^j + r_k a_k^j. 
\end{equation}
Moreover, we will show that the flat-roof property ensures the existence of a sequence $(\varepsilon_j)$ with $\varepsilon_j\tend{j}{\infty}0$ such that
\begin{equation}
 \label{eq:approxime Delta_k}
\left| \Delta^{ks_j}(A\times B) - \dfrac{\ell_k+r_k}{h_j}\right| \le \varepsilon_j. 
\end{equation}
Indeed, let us cut $A$ into $A_1\egdef A\cap\bigsqcup_{0\le i\le k-1}T_{s_j}^iE_j$ and $A_2\egdef A\cap\bigsqcup_{k\le i\le h_j-1}T_{s_j}^iE_j$. We have 
$$ \Delta^{ks_j}(A_2\times B) = r_k \mu(E_j), $$
and
$$ \Delta^{ks_j}(A_1\times B) = \ell_k \Delta^{ks_j}(E_j\times T_{s_j}^{h_j-k}E_j) + \Delta^{ks_j}\left((A_1\times B)\setminus C_j^{k-h_j}\right). $$
Recalling that $\Delta^{ks_j}(E_j\times T_{s_j}^{h_j-k}E_j)=\mu(E_j\cap T_{s_j}^{h_j}E_j)$, we get
\begin{equation}
 \label{eq:Delta_k}
\Delta^{ks_j}(A\times B) = (r_k +\ell_k)\mu(E_j)-\ell_k\mu(E_j\setminus T_{s_j}^{h_j}E_j)+ \Delta^{ks_j}\left((A_1\times B)\setminus C_j^{k-h_j}\right).
\end{equation}
The second term of the right-hand side is bounded by $h_j\mu(E_j\Delta T_{s_j}^{h_j}E_j)$, which goes to 0 by the flat-roof property. To treat the last term, we consider the particular case $A=B=\bigsqcup_{0\le i\le h_j-1}T_{s_j}^iE_j$, for which this last term is maximized. We have then 
$$ 1-\Delta^{ks_j}(A\times B) \le 2 \mu\left( X\setminus \bigsqcup_{0\le i\le h_j-1}T_{s_j}^iE_j\right)\tend{j}{\infty}0. $$
On the other hand, \eqref{eq:Delta_k} gives 
$$ \Delta^{ks_j}\left((A_1\times B)\setminus C_j^{k-h_j}\right) = \Delta^{ks_j}(A\times B) - h_j\mu(E_j) + k\mu(E_j\setminus T_{s_j}^{h_j}E_j). $$
Since $h_j\mu(E_j)\to 1$, and $k\mu(E_j\setminus T_{s_j}^{h_j}E_j)\le h_j\mu(E_j\Delta T_{s_j}^{h_j}E_j)\to 0$, we get that the last term of~\eqref{eq:Delta_k} goes to 0 uniformly with respect to $k$, $A$ and $B$. 
It follows that
$$ \left| \Delta^{ks_j}(A\times B) - (\ell_k+r_k)\mu(E_j)\right| \tend{j}{\infty}0, $$
uniformly with respect to $k$, $A$ and $B$. This concludes the proof of~\eqref{eq:approxime Delta_k}.

Equations~\eqref{eq:approxime Delta_k} and~\eqref{eq:first} give
\begin{align*}
 &\sum_{k=0}^{h_j-1} \Bigl|\nu(A\times B|G_j^k) - \Delta^{ks_j}(A\times B))\Bigr| \nu(G_j^k)\\
\le \ & \sum_{k=0}^{h_j-1} |a_k^j-b_k^j| \left| \ell_k - \frac{k}{h_j}(\ell_k + r_k) \right| + \varepsilon_j\\
\le \ & h_j \sum_{k=0}^{h_j-1} |a_k^j-b_k^j| + \varepsilon_j
\end{align*}
which goes to 0 as $j\to\infty$ by~\eqref{eq:flat roof property}. 

Recalling~\eqref{eq:decomposition in Gjk}, we obtain
$$ \sum_{k=0}^{h_j-1} \nu(G_j^k)\Delta^{ks_j} \tend[w]{j}{\infty} \nu. $$
It remains to apply the Choice Lemma to conclude the proof of the theorem.
\end{proof}

\section{$\ZZ^n$-Rank-one action}

We consider now an action of $\ZZ^n$ ($n\ge1$). For $k\in\ZZ^n$, we denote by $k(1),\ldots, k(n)$ its coordinates.

\begin{definition}
\label{def:rank-one Zn}
 A $\ZZ^n$-action $\{T_k\}_{k\in\ZZ^n}$ is of rank one if there exists a sequence $(\xi_j)$ of partitions converging to the partition into points, where $\xi_j$ is of the form
$$\xi_j= \left\{ \left(T_k E_j\right)_{k\in R_j},  X\setminus \u_{k} T_{k} E_j\right\},$$
and $R_j$ is a rectangular set of indices:
$$R_j=\{0,\ldots,h_j(1)-1\}\times\cdots\times\{0,\ldots,h_j(n)-1\}.$$
\end{definition}

Note that the above definition corresponds to so-called \emph{$\mathcal{R}$-rank one actions} defined in~\cite{RS2011} with the additional condition that the shapes in the sequence $\mathcal{R}$ be rectangles.
The sequence $(\xi_j)$ in the above definition being fixed, we define as for the rank-one flows the notions of columns and fat diagonals: For any $k\in\ZZ^n$, we set
$$ 
C^k_j\egdef \u_{\genfrac{}{}{0pt}{}{\scriptstyle r,\ell \in R_j}{\scriptstyle r-\ell = k}} T_{r}E_j\times T_\ell E_j,
$$
and given $0<\delta<1$, 
$$D^\delta_j\egdef\u_{ k:\ \prod_i(h_j(i)-|k(i)|)\ge(1-\delta)\prod_i h_j(i)} C^k_j.$$

\begin{lemma}
\label{lemma:ZnLemma}
 For any self-joining $\nu$ of the rank-one action $\{T_k\}_{k\in\ZZ^n}$, for any $\delta>1-1/2^n$, we have
$$ \liminf_{j\to\infty} \nu(D^\delta_j) > 0. $$
\end{lemma}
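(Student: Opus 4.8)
The plan is to bypass the joining entirely. I will exhibit, for each $j$, a ``central core'' $G_j\subseteq X$ whose $\mu$-measure is bounded below independently of $j$ and which satisfies $G_j\times\text{Tow}_j\subseteq D^\delta_j$ as sets, where $\text{Tow}_j\egdef\u_{r\in R_j}T_rE_j$. Since every self-joining $\nu$ has first (and second) marginal equal to $\mu$, this forces $\nu(D^\delta_j)\gtrsim\mu(G_j)$. In contrast with the one-dimensional and flow cases, no ergodicity input (Lemma~\ref{lemma:lambda=nu}) is needed here; the estimate is purely combinatorial, and the role of the hypothesis $\delta>1-1/2^n$ is only to make the core non-degenerate.

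In detail, the convergence of $(\xi_j)$ to the partition into points gives $\mu(X\setminus\text{Tow}_j)\tend{j}{\infty}0$, hence $|R_j|\,\mu(E_j)\tend{j}{\infty}1$. For $x\in\text{Tow}_j$ let $r(x)\in R_j$ be the unique multi-index with $x\in T_{r(x)}E_j$ (similarly $\ell(y)$ for $y$), so that $(x,y)\in C^{r(x)-\ell(y)}_j$. Set $\rho\egdef(1-\delta)^{1/n}$; the assumption $\delta>1-1/2^n$ is exactly $\rho<\tfrac12$. For each coordinate $i$ put
$$P_j^i\egdef\Bigl\{m\in\{0,\dots,h_j(i)-1\}:\ \rho\,h_j(i)-1\le m\le(1-\rho)\,h_j(i)\Bigr\},\qquad G_j\egdef\u_{r\in P_j^1\times\cdots\times P_j^n}T_rE_j.$$

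The key step is the inclusion $G_j\times\text{Tow}_j\subseteq D^\delta_j$. If $x\in G_j$ then for every $i$ both $r_i(x)\le(1-\rho)h_j(i)$ and $h_j(i)-1-r_i(x)\le(1-\rho)h_j(i)$; hence, for an arbitrary $\ell\in R_j$,
$$|r_i(x)-\ell_i|\le\max\bigl(r_i(x),\,h_j(i)-1-r_i(x)\bigr)\le(1-\rho)h_j(i),$$
so $h_j(i)-|r_i(x)-\ell_i|\ge\rho\,h_j(i)$. Multiplying over $i$ gives $\prod_i\bigl(h_j(i)-|r_i(x)-\ell_i|\bigr)\ge\rho^n\prod_ih_j(i)=(1-\delta)\prod_ih_j(i)$, so the column $C^{r(x)-\ell}_j$ is one of those defining $D^\delta_j$; taking $\ell=\ell(y)$ gives $(x,y)\in D^\delta_j$. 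For $\mu(G_j)$: since $\rho<\tfrac12$, the interval $[\rho\,h_j(i)-1,\,(1-\rho)h_j(i)]$ has length $(1-2\rho)h_j(i)+1>1$ and contributes at least $(1-2\rho)h_j(i)$ admissible integers to $P_j^i$ (when $\rho\,h_j(i)<1$ one simply gets $P_j^i=\{0,\dots,h_j(i)-1\}$); thus $|P_j^i|\ge(1-2\rho)h_j(i)$ and $\mu(G_j)=\bigl(\prod_i|P_j^i|\bigr)\mu(E_j)\ge(1-2\rho)^n\,|R_j|\,\mu(E_j)\tend{j}{\infty}(1-2\rho)^n$.

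Finally, using only that the marginals of $\nu$ are $\mu$,
$$\nu(D^\delta_j)\ \ge\ \nu\bigl(G_j\times\text{Tow}_j\bigr)\ \ge\ \nu(G_j\times X)-\nu\bigl(X\times(X\setminus\text{Tow}_j)\bigr)\ =\ \mu(G_j)-\mu(X\setminus\text{Tow}_j),$$
whence $\liminf_{j\to\infty}\nu(D^\delta_j)\ge(1-2\rho)^n=\bigl(1-2(1-\delta)^{1/n}\bigr)^n>0$ (for $n=1$ this recovers the bound $2\delta-1$ of the one-dimensional situation). The only point needing care is the bookkeeping in the definition of $P_j^i$: it must be non-empty and contribute a fixed positive fraction of $\{0,\dots,h_j(i)-1\}$ for \emph{every} tower shape, in particular along coordinates where $h_j(i)$ need not tend to infinity. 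This is why I phrase $P_j^i$ through the closed inequality $\rho\,h_j(i)-1\le m\le(1-\rho)h_j(i)$ rather than an open ``band''; once $\rho<\tfrac12$ the required lower bound on $|P_j^i|$ is automatic, and everything else is a direct computation.
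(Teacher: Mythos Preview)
Your proof is correct and follows essentially the same approach as the paper's: exhibit a central rectangular core $G_j$ of indices near the middle of $R_j$ for which $G_j\times\text{Tow}_j\subseteq D^\delta_j$, then use that both marginals of $\nu$ equal $\mu$. You sharpen the constant to the optimal $(1-2(1-\delta)^{1/n})^n$ (the paper takes any $\varepsilon$ with $(1/2-\varepsilon)^n>1-\delta$ and obtains $(2\varepsilon)^n$) and are more explicit about the integer bookkeeping for possibly small $h_j(i)$, but the idea is the same.
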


\begin{proof}
 We can find $\varepsilon>0$, small enough such that
$$ \left(\frac{1}{2}-\varepsilon\right)^n>1-\delta. $$
Let $r\in\ZZ^n$ be such that
$$ \forall i,\ \left(\frac{1}{2}-\varepsilon\right)h_j(i) < r(i) < \left(\frac{1}{2}+\varepsilon\right)h_j(i). $$
Then, for any $\ell\in R_j$, we have for all $i$: $|r(i)-\ell(i)|<\left(\frac{1}{2}+\varepsilon\right)h_j(i)$. Hence 
$$ \prod_i \Bigl(h_j(i)-|r(i)-\ell(i)|\Bigr) > (1-\delta)\prod_i {h_j(i)}, $$
which means that for any $\ell\in R_j$, the column $C^{r-\ell}_j$ is contained in $D^\delta_j$. 
It follows that
$$ \left(\u_{r:\ \forall i,\ |r(i)-h_j(i)/2|<\varepsilon h_j(i)} T_r E_j\right) \times \left(\u_{\ell\in R_j} T_\ell E_j\right) \subset D^\delta_j. $$
We then get
$$ \liminf_{j\to\infty} \nu(D^\delta_j) \ge \liminf_{j\to\infty}
 \mu\left(\u_{r:\ \forall i,\ |r(i)-h_j(i)/2|<\varepsilon h_j(i)} T_r E_j\right) = (2\varepsilon)^n. $$
\end{proof}

We can now state the analogue of Theorem~\ref{thm:1.2} for $\ZZ^n$-rank-one action, which was first proved by A.A.~Pavlova in~\cite{Pavlova2008}.

\begin{theo}
\label{thm:Pavlova}
 Let $\nu$ be an ergodic self-joining of the $\ZZ^n$-rank-one action $\{T_k\}_{k\in\ZZ^n}$. Then we can find a sequence $(k_j)$ in $\ZZ^n$ and some self-joining $\nu'$ such that 
$\Delta^{k_j} \tend[w]{j}{\infty} \frac{1}{2^n}\nu +\left(1-\frac{1}{2^n}\right)\nu'$: For all measurable sets $A,B$
$$\mu(A\cap T_{k_j}B) \to \frac{1}{2^n}\nu(A\times B) +
\left(1-\frac{1}{2^n}\right)\nu'(A\times B).$$
\end{theo}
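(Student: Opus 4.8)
The plan is to transcribe the proof of Theorem~\ref{thm:1.2} to the $\ZZ^n$-setting, with the standard generators $T_{e_1},\dots,T_{e_n}$ of the action (here $e_1,\dots,e_n$ denotes the canonical basis of $\ZZ^n$) playing the role of the maps $T_{s_j}$ used for flows, and with Lemma~\ref{lemma:ZnLemma} replacing the elementary bound $\lim_j\nu(D^\delta_j)>2\delta-1$ that was available for flows. A preliminary reduction: the $\mu$-measure estimates below require that $h_j(i)\tend{j}{\infty}\infty$ for each $i$, which we may assume (regrouping the towers as in the proof of Lemma~\ref{lemma:new s_j} if necessary).

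The first task is to prove the $\ZZ^n$-analogue of the Approximation Lemma~\ref{Lemma:approximation}: \emph{if $\delta\in(0,1)$ satisfies $\liminf_j\nu(D^\delta_j)>0$, then there is a sequence $(k_j)$ in $\ZZ^n$ with $\prod_i\bigl(h_j(i)-|k_j(i)|\bigr)\ge(1-\delta)\prod_i h_j(i)$ such that $\Delta^{k_j}(\,\cdot\,|C^{k_j}_j)\tend[w]{j}{\infty}\nu$}. The three steps of the flow proof carry over: (i) each fat diagonal $D^\delta_j$ is asymptotically $T_{e_i}\times T_{e_i}$-invariant for every $i$, because the cells of $X\times X$ destroyed or created when $D^\delta_j$ is shifted by $T_{e_i}\times T_{e_i}$ are all contained in a set of the form $X\times F$ or $F\times X$, where $F$ is a union of $O\bigl(\prod_{i'\ne i}h_j(i')\bigr)$ cells $T_\ell E_j$, so that $\mu(F)=O(1/h_j(i))\to0$; consequently any weak limit $\lambda$ of $\nu(\,\cdot\,|D^\delta_j)$ is invariant under the whole diagonal action $\{T_k\times T_k\}_{k\in\ZZ^n}$, is absolutely continuous with respect to $\nu$ with density bounded by $1/\liminf_j\nu(D^\delta_j)$, and therefore equals $\nu$ by ergodicity of $\nu$ --- note that here ergodicity is used directly and no analogue of Lemma~\ref{lemma:lambda=nu} is needed; (ii) the relation $\sum_k\nu(C^k_j|D^\delta_j)\,\Delta^{k}(\,\cdot\,|C^k_j)\tend[w]{j}{\infty}\nu$ follows exactly as in the flow case from the $M_1$--$M_4$ decomposition, using that the marginals of $\Delta^{k}(\,\cdot\,|C^k_j)$ have density $\le(1-\delta)^{-1}(1+o(1))$ with respect to $\mu$ and that $\Delta^{k}(\,\cdot\,|C^k_j)$ agrees with $\nu(\,\cdot\,|C^k_j)$ on products of $\xi_j$-measurable sets; (iii) a Choice Lemma for $\ZZ^n$-actions --- the flow Choice Lemma with the $T_{e_i}$ replacing the $T_{s_j}$ and ergodicity of $\nu$ invoked in place of Lemma~\ref{lemma:lambda=nu} --- applies to $\nu^k_j\egdef\Delta^{k}(\,\cdot\,|C^k_j)$ and $a^k_j\egdef\nu(C^k_j|D^\delta_j)$, since each $\nu^k_j$ is $T_{e_i}\times T_{e_i}$-almost-invariant with an error $O(1/h_j(i))$, uniformly in $k$.

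With this in hand I would finish as in Theorem~\ref{thm:1.2}. By Lemma~\ref{lemma:ZnLemma}, $\liminf_j\nu(D^\delta_j)>0$ for every $\delta>1-2^{-n}$; applying the $\ZZ^n$-analogue of Lemma~\ref{Lemma:approximation} along a sequence $\delta_m\searrow1-2^{-n}$ and taking a diagonal subsequence produces a sequence $(k_j)$ with $\prod_i(h_j(i)-|k_j(i)|)\ge(1-\delta_j)\prod_i h_j(i)$, $\delta_j\searrow1-2^{-n}$, and $\Delta^{k_j}(\,\cdot\,|C^{k_j}_j)\tend[w]{j}{\infty}\nu$. Since $\Delta^{k_j}(C^{k_j}_j)=\prod_i(h_j(i)-|k_j(i)|)\,\mu(E_j)$ and $\prod_i h_j(i)\,\mu(E_j)\to1$, we get $\liminf_j\Delta^{k_j}(C^{k_j}_j)\ge\liminf_j(1-\delta_j)=2^{-n}$. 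Writing
$$\Delta^{k_j}=\Delta^{k_j}(C^{k_j}_j)\,\Delta^{k_j}(\,\cdot\,|C^{k_j}_j)+\bigl(1-\Delta^{k_j}(C^{k_j}_j)\bigr)\,\Delta^{k_j}\bigl(\,\cdot\,|X\times X\setminus C^{k_j}_j\bigr)$$
and passing to a further subsequence along which $\Delta^{k_j}$, $\Delta^{k_j}(C^{k_j}_j)$ and the complementary conditional measure all converge, the limit of $\Delta^{k_j}$ has the form $c\,\nu+(1-c)\rho$ with $c\ge2^{-n}$ and $\rho$ a self-joining (the suitably normalised difference of the two $T\times T$-invariant measures has both marginals equal to $\mu$). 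Absorbing the excess mass $c-2^{-n}$ into the second summand yields a self-joining $\nu'$ with $\Delta^{k_j}\tend[w]{j}{\infty}\frac1{2^n}\nu+\bigl(1-\frac1{2^n}\bigr)\nu'$, as claimed.

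The step I expect to be the main obstacle is the asymptotic $T_{e_i}\times T_{e_i}$-invariance used throughout (i)--(iii): for a flow the portion of the tower-square created or destroyed by the shift is a single row of cells, whose total $\nu$-measure is $O(\mu(E_j))$ simply because it lies in the union of four slabs of the form $T_m E_j\times X$ or $X\times T_m E_j$, whereas in dimension $n$ it is a full codimension-one slice, and one must check that it still lies in slabs of $\mu$-measure $O(1/h_j(i))$ and hence has vanishing $\nu$-mass once $h_j(i)\to\infty$. The rest is a routine transcription of the flow arguments, in fact lighter, since the discreteness of $\ZZ^n$ eliminates any need for the dense-subgroup argument of Lemma~\ref{lemma:lambda=nu}.
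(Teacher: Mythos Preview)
Your proposal is correct and follows essentially the same approach as the paper's own proof, which is deliberately terse: it simply states that Lemma~\ref{Lemma:approximation} ``can be easily adapted to the $\ZZ^n$-situation,'' invokes Lemma~\ref{lemma:ZnLemma} and a diagonal argument to obtain $(k_j)$ and $(\delta_j)\searrow 1-2^{-n}$ with $\Delta^{k_j}(\,\cdot\,|C_j^{k_j})\to\nu$, and then computes $\Delta^{k_j}(C_j^{k_j})=\prod_i\bigl(h_j(i)-1-|k_j(i)|\bigr)\mu(E_j)$ to obtain the $2^{-n}$ lower bound. Your write-up supplies exactly the details the paper omits (the codimension-one slab estimate for almost-invariance, the observation that ergodicity of $\nu$ replaces Lemma~\ref{lemma:lambda=nu} in the discrete setting, and the explicit reduction to $h_j(i)\to\infty$), and your identification of the almost-invariance step as the one place requiring genuine verification is well placed.
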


\begin{proof}
The proof follows the same lines as for Theorem~\ref{thm:1.2}. 
First note that Lemma~\ref{Lemma:approximation} can be easily adapted to the $\ZZ^n$-situation. Hence, by Lemma~\ref{lemma:ZnLemma}, using a diagonal argument, we get the existence of $(k_j)$ and $(\delta_j)\searrow 1-\frac{1}{2^n}$  with $C_j^{k_j}\subset D_j^{\delta_j}$ such that 
$$\Delta^{k_j}\left( \, \cdot\, | C_j^{k_j}\right) \tend[w]{j}{\infty} \nu. $$
To conclude, it remains to prove that $\liminf\Delta^{k_j}( C_j^{k_j})\ge 1/2^n$. To this aim, we count the number of pairs $(r,\ell)$ such that $T_rE_j\times T_\ell E_j \subset C_j^{k_j}$. We can easily check that these are exactly the pairs $(r,\ell)$ such that, for all $1\le i\le n$, there exists $m(i)\in\{0,\ldots,h_j(i)-1-|k_j(i)|\}$ with
$$ \bigl(r(i),\ell(i)\bigr) = \begin{cases}
                     \bigl(k_j(i)+m(i),m(i)\bigr) & \text{if $k_j(i)\ge0$ }\\
		     \bigl(m(i), -k_j(i)+m(i)\bigr)  & \text{otherwise}.
                    \end{cases}
$$ 
Hence $\Delta^{k_j}( C_j^{k_j})=\prod_i \Bigl(h_j(i)-1-|k_j(i)|\Bigr)\mu(E_j)$. Using the fact that $C_j^{k_j}\subset D_j^{\delta_j}$, we get the desired result.
\end{proof}

When $n\ge2$, it is known that the Weak Closure Theorem fails (counterexamples have been given in~\cite{DK2002,DS2009}). However, as a consequence of Theorem~\ref{thm:Pavlova}, we get the following:

\begin{corollary}[Partial Weak Closure Theorem for $\ZZ^n$-rank-one action]
\label{Cor:partialWCT}
Let $S$ be an automorphism commuting with the $\ZZ^n$-rank-one action $\{T_k\}_{k\in\ZZ^n}$. 
Then we can find a sequence $(k_j)$ in $\ZZ^n$ and some self-joining $\nu'$ such that 
$$ \Delta^{k_j}\tend{j}{\infty}\dfrac{1}{2^n}\Delta_S+\left(1-\frac{1}{2^n}\right)\nu'. $$
Moreover, if $S\notin\{T_k\,\ k\in\ZZ^n \}$, then $\{T_k\}_{k\in\ZZ^n}$ is partially rigid: There exists a sequence $(k'_\ell)$ in $\ZZ^n$ with $|k'_\ell|\to\infty$ such that for all measurable sets $A$ and $B$
$$ \liminf_{\ell\to\infty} \mu\left(A\cap T_{k'_\ell}B\right)\ge \dfrac{1}{2^{2n}}\mu(A\cap B). $$
\end{corollary}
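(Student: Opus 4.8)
The plan is to deduce the two assertions directly from Theorem~\ref{thm:Pavlova} applied to well-chosen ergodic self-joinings built from the commuting automorphism $S$. For the first statement, consider the self-joining $\Delta_S$ supported on the graph of $S^{-1}$. Since $S$ commutes with the action, $\Delta_S$ is indeed $\{T_k\times T_k\}$-invariant; however, $\Delta_S$ need not be ergodic, so I would first pass to its ergodic decomposition and apply Theorem~\ref{thm:Pavlova} to each ergodic component, or (more simply) observe that $\Delta_S$ is ergodic whenever the action itself is ergodic on $X$, which is part of the rank-one hypothesis as used throughout the paper. Applying Theorem~\ref{thm:Pavlova} to $\nu=\Delta_S$ yields a sequence $(k_j)$ in $\ZZ^n$ and a self-joining $\nu'$ with
$$
\Delta^{k_j}\tend[w]{j}{\infty} \frac{1}{2^n}\Delta_S+\Bigl(1-\frac{1}{2^n}\Bigr)\nu',
$$
which is exactly the first claim.

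For the partial rigidity statement, I would argue by contradiction with the escape of the sequence $(k_j)$ to infinity. Suppose first that the sequence $(k_j)$ produced above has a bounded subsequence; then along a further subsequence $k_j\equiv k_0$ is constant, so $\Delta^{k_j}=\Delta^{k_0}=\Delta_{T_{k_0}}$ is a fixed extreme point of the simplex of self-joinings (it is ergodic), and the convex combination $\frac{1}{2^n}\Delta_S+(1-\frac{1}{2^n})\nu'=\Delta_{T_{k_0}}$ forces $\Delta_S=\Delta_{T_{k_0}}$, i.e. $S=T_{k_0}\in\{T_k:k\in\ZZ^n\}$, contradicting the hypothesis $S\notin\{T_k\}$. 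Hence we may assume $|k_j|\to\infty$. Now for arbitrary measurable sets $A,B$ apply the weak convergence to the pair $(A, S^{-1}B)$: using $\Delta_S(A\times S^{-1}B)=\mu(A\cap S S^{-1}B)=\mu(A\cap B)$ and discarding the nonnegative term $(1-2^{-n})\nu'(A\times S^{-1}B)$, we obtain
$$
\liminf_{j\to\infty}\mu\bigl(A\cap T_{k_j}S^{-1}B\bigr)\ \ge\ \frac{1}{2^n}\,\mu(A\cap B).
$$
Since $S^{-1}$ commutes with each $T_{k_j}$, we have $\mu(A\cap T_{k_j}S^{-1}B)=\mu(SA\cap T_{k_j}B)$, so replacing $A$ by $S^{-1}A$ this reads $\liminf_j\mu(A\cap T_{k_j}S^{-1}\cdot)$... — here one must be slightly careful to land on the exponent $2^{2n}$ claimed in the statement rather than $2^n$, which suggests the intended argument is to compose two instances of the partial approximation (once for $S$, once going from $T_{k_j}S^{-1}$-type terms back to genuine $T_{k'_\ell}$ terms), each contributing a factor $2^{-n}$; alternatively one iterates Theorem~\ref{thm:Pavlova} on the self-joining $\nu'$ itself to trade the $\nu'$-part for another copy of a graph joining.

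The main obstacle I anticipate is precisely this last bookkeeping: extracting a genuine \emph{power} of the action from a convex combination in which the "bad" part $\nu'$ is completely uncontrolled, and doing so while forcing $|k'_\ell|\to\infty$. The clean way around it is to apply Theorem~\ref{thm:Pavlova} a second time, now to an ergodic component of $\nu'$ (or of a joining obtained from the first step), producing another sequence $(m_j)$ and convex decomposition; composing the two approximations and using that the product of two self-joinings each dominating $2^{-n}\Delta_{\mathrm{something}}$ dominates $2^{-2n}$ times a relative-product joining gives the stated inequality with constant $2^{-2n}$. The escape $|k'_\ell|\to\infty$ is then secured exactly as above: a bounded subsequence would express some $\Delta_{T_{k_0}}$ as a nontrivial convex combination involving $\Delta_S$, contradicting $S\notin\{T_k\}$ together with the ergodicity (hence extremality) of graph self-joinings. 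All the analytic content — weak convergence, invariance of columns, the density bounds — is already supplied by Lemma~\ref{Lemma:approximation}, Lemma~\ref{lemma:ZnLemma} and Theorem~\ref{thm:Pavlova}, so the remaining work is purely the combinatorics of convex combinations of joinings.
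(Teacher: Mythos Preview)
Your first assertion and the argument that $|k_j|\to\infty$ match the paper exactly. The issue is in the partial-rigidity step. You correctly diagnose that two applications of the $2^{-n}$-approximation are needed, and your parenthetical remark (``once for $S$, once going from $T_{k_j}S^{-1}$-type terms back to genuine $T_{k'_\ell}$ terms'') is in fact the right instinct. But the route you then elaborate---applying Theorem~\ref{thm:Pavlova} a second time to an ergodic component of $\nu'$---is a dead end: a decomposition $\Delta^{m_j}\to 2^{-n}\nu'+(1-2^{-n})\nu''$ gives no handle relating $\nu'$ back to the diagonal $\Delta$, and the ``product of two self-joinings'' you invoke is not a well-defined operation in this context.

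The paper's device is simpler and never touches $\nu'$. It reuses the \emph{same} sequence $(k_j)$ at two successive indices. For finitely many cylinder test sets $A,B$ and small $\eps_\ell>0$, pick $j_1=j_1(\ell)$ so large that
\[
\mu\bigl(T_{k_{j_1}}A\cap SB\bigr)\ \ge\ \Bigl(\tfrac{1}{2^n}-\eps_\ell\Bigr)\,\mu(SA\cap SB)=\Bigl(\tfrac{1}{2^n}-\eps_\ell\Bigr)\,\mu(A\cap B),
\]
then pick $j_2=j_2(\ell)$ with $|k_{j_2}|$ much larger than $|k_{j_1}|$ so that
\[
\mu\bigl(T_{k_{j_1}}A\cap T_{k_{j_2}}B\bigr)\ \ge\ \Bigl(\tfrac{1}{2^n}-\eps_\ell\Bigr)\,\mu\bigl(T_{k_{j_1}}A\cap SB\bigr).
\]
Chaining the two inequalities and setting $k'_\ell\egdef k_{j_2(\ell)}-k_{j_1(\ell)}$ (which satisfies $|k'_\ell|\to\infty$) gives
\[
\mu\bigl(A\cap T_{k'_\ell}B\bigr)\ \ge\ \Bigl(\tfrac{1}{2^n}-\eps_\ell\Bigr)^{2}\mu(A\cap B),
\]
whence the constant $2^{-2n}$. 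So the missing ingredient is precisely this two-index chaining of the single approximation $\Delta^{k_j}\gtrsim 2^{-n}\Delta_S$, applied once to the pair $(SB,A)$ and once to the pair $(T_{k_{j_1}}A,B)$; there is no need for a second invocation of Theorem~\ref{thm:Pavlova}.
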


\begin{proof}
The first part is a direct application of Theorem~\ref{thm:Pavlova} with $\nu=\Delta_S$. If moreover $S\notin\{T_k\,\ k\in\ZZ^n \}$, then the sequence $(k_j)$ of the theorem must satisfy $|k_j|\to \infty$. Let us enumerate the cylinder sets as $\{A_0,A_1,\ldots,A_\ell,\ldots\}$.
Let $(\eps_\ell)$ be a sequence of positive numbers decreasing to zero. For any $\ell$, we can find a large enough integer $j_1(\ell)$ such that for all cylinder sets $A,B\in\{A_0,A_1,\ldots,A_\ell\}$, 
$$ \mu\left(T_{k_{j_1(\ell)}}A\cap SB\right) \ge \left(\dfrac{1}{2^n} - \eps_\ell \right) \mu (SA\cap SB)
=\left(\dfrac{1}{2^n} - \eps_\ell \right) \mu (A\cap B). $$
Then, we can find  a large enough integer $j_2(\ell)$ with $|j_2(\ell)|>2|j_1(\ell)|$ such that for all cylinder sets $A,B\in\{A_0,A_1,\ldots,A_\ell\}$, 
$$ \mu\left(T_{k_{j_1(\ell)}}A\cap T_{k_{j_2(\ell)}}B\right) \ge \left(\dfrac{1}{2^n} - \eps_\ell \right) \mu (T_{k_{j_1(\ell)}}A\cap SB). $$ 
It follows that for all $\ell\ge0$ and all cylinder sets $A,B\in\{A_0,A_1,\ldots,A_\ell\}$,
$$ \mu\left(A\cap T_{k_{j_2(\ell)}-k_{j_1(\ell)}}B\right) \ge \left(\dfrac{1}{2^n} - \eps_\ell \right)^2 \mu (A\cap B).$$
This proves the result announced in the corollary when $A$ and $B$ are cylinder sets with $k'_\ell\egdef k_{j_2(\ell)}-k_{j_1(\ell)}$, and this extends in a standard way to all measurable sets.
\end{proof}

The counterexample given in~\cite{DS2009} also shows that the rigidity of factors is no more valid when $n\ge2$. Theorem~\ref{thm:Pavlova} only ensures the partial rigidity of factors of $\ZZ^n$-rank-one actions.

\begin{corollary}[Partial rigidity of factors of $\ZZ^n$-rank-one action]
\label{Cor:partial rigidity}
  Let $\F$ be a non-trivial factor of the $\ZZ^n$-rank-one action $\{T_k\}_{k\in\ZZ^n}$. 
Then there exists a sequence $(k_j)$ in $\ZZ^n$ with $|k_j|\to \infty$ such that, for all measurable sets $A,B\in\F$
$$ \liminf\mu(A\cap T_{k_j}B ) \ge \dfrac{1}{2^{n}}\mu(A\cap B).$$
\end{corollary}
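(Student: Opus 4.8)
The plan is to mimic the proof of Corollary~\ref{Cor:partialWCT}, replacing the graph joining $\Delta_S$ by an ergodic self-joining attached to the factor $\F$. Since $\F$ is non-trivial, $\mu\otimes_\F\mu\neq\Delta$, so exactly as in the proof of Lemma~\ref{lemma:rigidity} we fix an ergodic component $\nu$ of $\mu\otimes_\F\mu$ with $\nu(\{(x,x):x\in X\})=0$; recall that $\nu$ still coincides with $\Delta$ on $\F\otimes\F$, that is, $\nu(A\times B)=\mu(A\cap B)$ for all $A,B\in\F$. Applying Theorem~\ref{thm:Pavlova} to $\nu$ produces a sequence $(k_j)$ in $\ZZ^n$ and a self-joining $\nu'$ with $\Delta^{k_j}\tend[w]{j}{\infty}\frac1{2^n}\nu+(1-\frac1{2^n})\nu'$. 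For measurable $A,B\in\F$, since $\nu'(A\times B)\ge0$, this already gives
$$\mu(A\cap T_{k_j}B)=\Delta^{k_j}(A\times B)\ \longrightarrow\ \frac1{2^n}\mu(A\cap B)+\Bigl(1-\frac1{2^n}\Bigr)\nu'(A\times B)\ \ge\ \frac1{2^n}\mu(A\cap B),$$
which is the desired inequality as soon as $|k_j|\to\infty$.

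The only real point to check is therefore this last condition. If $|k_j|\not\to\infty$, then along a subsequence $k_j$ equals a fixed $k\in\ZZ^n$, whence $\Delta^k=\frac1{2^n}\nu+(1-\frac1{2^n})\nu'$ and in particular $\nu\le 2^n\,\Delta^k$. Since $\Delta^k$ is carried by the graph of $T_{-k}$, the system $(X\times X,\Delta^k,T\times T)$ is isomorphic to the ergodic system $(X,\mu,T)$; hence $\nu\ll\Delta^k$ forces $d\nu/d\Delta^k$ to be $(T\times T)$-invariant and thus constant, so $\nu=\Delta^k$. Restricting to $\F\otimes\F$ and using $\nu(A\times B)=\mu(A\cap B)$ for $A,B\in\F$, we obtain $\mu(A\cap T_kB)=\mu(A\cap B)$ for all $A,B\in\F$; if $k=0$ this means $\nu=\Delta$, contradicting $\nu(\{(x,x)\})=0$, so $k\neq0$.

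In this degenerate case the corollary follows directly from the sequence $k'_j\egdef jk$: we have $|k'_j|\to\infty$, and since $T_{\ell k}B\in\F$ whenever $B\in\F$, an immediate induction on $\ell$ starting from $\mu(A\cap T_kC)=\mu(A\cap C)$ (valid for $A,C\in\F$) gives $\mu(A\cap T_{k'_j}B)=\mu(A\cap B)\ge\frac1{2^n}\mu(A\cap B)$ for all $A,B\in\F$. In all cases we have exhibited a sequence in $\ZZ^n$ tending to infinity for which the required $\liminf$ bound holds. The main obstacle is precisely this verification that one may assume $|k_j|\to\infty$; everything else is a transcription of the argument of Corollary~\ref{Cor:partialWCT}, with the relation $\nu(A\times B)=\mu(A\cap B)$ on $\F\otimes\F$ playing the role that the commutation with $S$ played there.
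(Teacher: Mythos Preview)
Your proof is correct and follows the same approach as the paper: apply Theorem~\ref{thm:Pavlova} to an off-diagonal ergodic component $\nu$ of $\mu\otimes_\F\mu$, and use that $\nu$ agrees with $\Delta$ on $\F\otimes\F$. The paper's proof is a single sentence and does not spell out why one may take $|k_j|\to\infty$; your treatment of this point (reducing the degenerate case $\nu=\Delta^k$ with $k\neq 0$ to the explicit rigid sequence $k'_j=jk$) is a legitimate and careful way to fill in that detail.
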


\begin{proof}
 This is a direct application of Theorem~\ref{thm:Pavlova} where $\nu$ is an ergodic component of the relatively independent joining above the factor $\F$. 
\end{proof}

\bibliography{rank-one}

\end{document}